\pgfplotsset{compat=1.18} 
\newcommand{\C}{\mathbb{C}}
\newcommand{\norm}[1]{\lVert #1 \rVert}
\newcommand{\abs}[1]{| #1 |}
\newcommand{\bv}{\mathbf{v}}
\newcommand{\bx}{\mathbf{x}}
\newcommand{\bw}{\mathbf{w}}
\newcommand{\FF}{\mathbb{F}}
\newcommand{\tr}{\text{tr}}
\newtheorem{thm}{Theorem}[section]
\newtheorem{lem}[thm]{Lemma}
\newtheorem{prop}[thm]{Proposition}
\newtheorem{claim}{Claim}
\newtheorem{ques}[thm]{Question}
\newtheorem{conj}[thm]{Conjecture}
\theoremstyle{definition}
\newtheorem{remark}[thm]{Remark}
\newtheorem{defn}[thm]{Definition}
\numberwithin{equation}{section}
\begin{document}

\title{Inertia, Independence and Expanders}

\author[Quanyu Tang]{Quanyu Tang}
\author[Shengtong Zhang]{Shengtong Zhang}
\author[Clive Elphick]{Clive Elphick}
\address{School of Mathematics and Statistics, Xi'an Jiaotong University, Xi'an 710049, P. R. China}
\email{tang\_quanyu@163.com}

\address{Department of Mathematics, Stanford University, Stanford, CA 94305, USA}
\email{stzh1555@stanford.edu}

\address{School of Mathematics, University of Birmingham, Birmingham, UK}
\email{clive.elphick@gmail.com}

\subjclass[2020]{Primary 05C50; Secondary 05C35, 05C48, 05C69.}


\begin{abstract}
Let $G$ be a graph on $n$ vertices, independence number $\alpha(G)$, Lov{\'a}sz theta function $\vartheta(G)$, and Shannon capacity $\Theta(G)$. We define $n_{\ge0}(G)$ to be the minimum number of non-negative eigenvalues taken over all Hermitian weighted adjacency matrices of $G$. It is well known that $\alpha(G) \le \Theta(G) \le \vartheta(G)$ and $\alpha(G) \le n_{\ge0}(G)$.

Continuing a long line of work, we investigate the relationships between $ \alpha(G) $, $ \vartheta(G) $, $\Theta(G)$, and $ n_{\ge 0}(G) $. We prove a conjecture of Kwan and Wigderson, showing that for every integer $k$, there exists a graph $G$ with $\alpha(G) \leq 2$ and $n_{\ge 0}(G) \ge k$. In addition, we prove that for every integer $k$, there exists a graph $G$ with $\Theta(G) \leq 3$ and $n_{\ge 0}(G) \ge k$. Both results rely on a new observation: if the complement of $G$ contains a good spectral expander, then $n_{\geq 0}(G)$ must be large. We also show that $\vartheta(G)$ can be exponentially larger than $n_{\ge 0}(G)$, improving a recent result of Ihringer.
\end{abstract}
\maketitle


\section{Introduction}
The inertia is an important tool in spectral graph theory that has recently attracted significant attention. We begin by recalling its definition.

Let $G = (V, E)$ be a simple graph with vertex set $V$ and edge set $E$. A \emph{weighted adjacency matrix} of $G$ is a Hermitian matrix $A$ with rows and columns indexed by $V$, such that $A_{ij} \neq 0$ only if $ij$ is an edge in $G$. The \emph{unweighted adjacency matrix} of $G$ is the matrix $A_G$ with $(A_{G})_{ij} = 1$ if $ij$ is an edge in $G$ and $(A_{G})_{ij} = 0$ otherwise.

For a Hermitian \(n \times n\) matrix \(X\), the \emph{inertia} \(n_{\geq 0}(X)\) is defined as the number of nonnegative eigenvalues of \(X\). Given a graph \( G \), we define its \emph{(weighted) inertia} as the minimum value of \( n_{\geq 0}(A) \) taken over all weighted adjacency matrices \( A \) of \( G \); we denote this quantity by \( n_{\geq 0}(G) \).\footnote{Throughout this paper, unless otherwise specified, when we refer to ``the inertia bound,'' we mean the minimum of \( n_{\geq 0}(A) \) over all weighted adjacency matrices \( A \) of \( G \), rather than the inertia of its unweighted adjacency matrix. We use \( n_{\geq 0}(A_G) \) to denote the number of nonnegative eigenvalues of the unweighted adjacency matrix \( A_G \).}

We denote the independence number of a graph \( G \) by \( \alpha(G) \). The celebrated inertia bound due to Cvetkovi\'{c} \cite{Cvetkovic1971} and Calderbank--Frankl \cite{Calderbank1992} states:
\begin{equation}\label{eq:inertia-bound-1-1}
\alpha(G) \leq n_{\geq 0}(G).\end{equation}
This bound, with different choices of the weighted adjacency matrix $A$, has a wide range of interesting applications in extremal set theory. For example, Calderbank--Frankl \cite{Calderbank1992} used the inertia bound to prove the Erd\H{o}s--Ko--Rado theorem. We refer to the introductory section of \cite{Kwan2024} for a comprehensive survey on this topic.

Given the extensive applications of the inertia bound, it is natural to ask whether it is tight. In 2004, Chris Godsil, in unpublished lecture notes, asked whether \( \alpha(G) = n_{\ge 0}(G) \) holds for all graphs, given an appropriately chosen weighted adjacency matrix. The first progress on this question appeared in 2018, when Sinkovic~\cite{Sinkovic} showed that for the Paley graph on 17 vertices, \( \alpha(G) < n_{\ge 0}(A) \) for all real symmetric weighted adjacency matrices $A$ of $G$. 
Further developments on Godsil’s question were made in~\cite{Mancinska},~\cite{Wocjan2022}, and~\cite{Kwan2024}, 
culminating in the following conjecture of Kwan and Wigderson.
\begin{conj}[\cite{Kwan2024}, Conjecture~4.2]
For every integer \(k\), there exists a graph \(G\) with \(\alpha(G) = 2\) and \(n_{\geq 0}(A) \geq k\) for every weighted adjacency matrix \(A\) of \(G\).
\end{conj}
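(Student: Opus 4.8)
The plan is to realize $G$ as the complement of a triangle-free spectral expander. Concretely, I would fix a family of triangle-free, non-bipartite, $d$-regular graphs $H = H_n$ on $n$ vertices whose second-largest eigenvalue in absolute value, $\lambda(H)$, satisfies $\lambda(H) = O(\sqrt d)$, with $d = d(n) \to \infty$; such graphs exist, for instance by starting from a random $d$-regular graph (with $d = o(n^{1/3})$) and deleting one edge from each of its $O(d^3) = o(n)$ triangles, fixing up regularity with $O(1)$ local modifications, or as Cayley graphs of $\mathbb{F}_2^{\,t}$ with a suitable sum-free connection set of size roughly $\sqrt n$. Setting $G = \overline{H}$, we have $\alpha(G) = \omega(H) = 2$ because $H$ is triangle-free with at least one edge. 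Everything then reduces to a lemma capturing the ``new observation'' advertised in the abstract: if $\overline{G}$ contains a $d$-regular subgraph $H$ with $\lambda(H) \le \lambda$, then $n_{\ge 0}(G) \ge f(d,\lambda)$ for a function $f$ with $f(d,\lambda) \to \infty$ as $d/\lambda \to \infty$ (for a subgraph $H$ on a vertex subset, interlacing reduces this to the spanning case). Granting the lemma, the theorem is immediate: pick $n$ so large that $f(d(n),\lambda(H_n)) \ge k$ and output $G = \overline{H_n}$.

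For the lemma, let $A$ be an arbitrary weighted adjacency matrix of $G$; since $A$ is supported on $E(G)$ and $A_H$ on $E(H)$, with both having zero diagonal, we have the exact identities $A \circ A_H = 0$ and $\operatorname{diag}(A) = 0$, where $\circ$ denotes the Hadamard product. Using the spectral decomposition $A_H = \sum_{l=1}^{n}\mu_l u_l u_l^{*}$ — with $\mu_1 = d$, $u_1 = n^{-1/2}\mathbf{1}$, $|\mu_l|\le\lambda$ for $l \ge 2$, and $\sum_l u_l u_l^{*} = I$ — these two facts combine into
\[
  \sum_{l=2}^{n}(d-\mu_l)\,D_{u_l}\,A\,D_{u_l}^{*} = 0, \qquad D_v := \operatorname{diag}(v),
\]
an identity whose coefficients $d-\mu_l$ all lie in $[d-\lambda,\,d+\lambda]$ — positive, and varying by a factor of at most $\tfrac{d+\lambda}{d-\lambda}$, close to $1$ once $d/\lambda$ is large. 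The point of this formulation is that, by Sylvester's law of inertia, each term $D_{u_l}A D_{u_l}^{*}$ is $*$-congruent to the principal submatrix of $A$ on $\operatorname{supp}(u_l)$ (together with some zero eigenvalues), so $n_{>0}(D_{u_l}A D_{u_l}^{*}) \le n_{>0}(A)$ and $n_{<0}(D_{u_l}A D_{u_l}^{*}) \le n_{<0}(A)$ by interlacing.

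The heart of the argument is to convert this identity into a lower bound on $n_{\ge 0}(A)$ — equivalently, to show that a nearly-uniform \emph{positive} combination of $n-1$ matrices, each carrying essentially the inertia of a fixed matrix $A$ with few nonnegative eigenvalues, cannot vanish. I would attack this by combining the sub-additivity $n_{>0}\!\left(\sum_l X_l\right) \le \sum_l n_{>0}(X_l)$ (a transversality/dimension count) and its dual for $n_{<0}$ with (i) interlacing as above, (ii) the near-uniformity of the weights $d-\mu_l$, and (iii) the fact that for triangle-free $d$-regular $H$ the relation $\operatorname{tr} A_H^{3} = 0$ forces the eigenvalues $\mu_l$ ($l \ge 2$) to be sign-balanced in a quantitatively controlled way, so that the ``positive bulk'' and ``negative bulk'' of the identity cannot simply cancel entry by entry. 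This should produce a bound of the form $n_{\ge 0}(A) \gtrsim d/\lambda$ (plausibly even $(d/\lambda)^2$), which is exactly what the lemma needs.

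I expect this conversion step to be the main obstacle. Any manipulation of ``$A$ is a weighted adjacency matrix of $\overline{H}$'' that remains linear in $A$ is tautological — it merely re-encodes $A \circ A_H = 0$ — so the hypothesis $n_{\ge 0}(A) < k$ and the positivity of the PSD cone (through Sylvester's law and interlacing) must enter nonlinearly and in an essential way, and pinning down the right such argument is where the real work lies. Two smaller technical points to dispatch along the way are eigenvectors $u_l$ of small support, so that $D_{u_l}A D_{u_l}^{*}$ reflects only a submatrix of $A$ — which interlacing already accommodates — and passing from the not-quite-regular expanders supplied by the probabilistic construction to the clean identity above, which a stability version of the argument should absorb.
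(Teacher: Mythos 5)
Your high-level plan matches the paper exactly: construct $G$ as the complement of a triangle-free $(n,d,\lambda)$-graph (Alon's construction, or the Cayley-graph variant you sketch), so that $\alpha(G) = 2$, and argue that spectral expansion of $\overline G$ forces $n_{\ge 0}(G)$ to be large. The gap is precisely where you flag it: the lemma converting expansion of $\overline G$ into a lower bound on the inertia of $G$ is not actually proved, and the route you propose for it does not appear to close.

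Two specific problems with your proposed conversion step. First, the sub-additivity bound $n_{>0}\bigl(\sum_l X_l\bigr) \le \sum_l n_{>0}(X_l)$ applied to $\sum_{l\ge 2}(d-\mu_l) D_{u_l} A D_{u_l}^{*} = 0$ gives $0 \le \sum_l n_{>0}(X_l)$, which is vacuous; conversely, splitting the identity into the ``positive'' and ``negative'' $\mu_l$ halves produces two sums, each built from matrices whose inertias interlace the \emph{same} $A$, so sub-additivity again returns an inequality that any $A$ satisfies. The identity you derive is correct (it just re-encodes $A \circ A_H = 0$ and $\operatorname{diag}(A) = 0$, as you note), but without a further \emph{quantitative} ingredient that distinguishes $A$ from an arbitrary Hermitian matrix, one cannot extract a bound on $n_{\ge 0}(A)$. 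Second, you omit the normalization step that supplies exactly this quantitative handle in the paper: by the Csima--Datta / Sinkhorn scaling argument (used in Kwan--Wigderson and reproduced in Lemma~2.2 of the paper), one may assume WLOG that every row of $A$ has $L^2$-norm $1$. This is what makes the argument non-tautological: it forces $\sum_i \lambda_i(A)^2 = n$, hence $\lambda_1(A) \ge \sqrt n / (2k)$ where $k = n_{\ge 0}(A)$, and in turn $\|\bv\|_\infty \le 2k/\sqrt n$ for the top eigenvector $\bv$ of $A$.

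The paper's actual conversion is genuinely different from yours: rather than working with the eigenvectors $u_l$ of $A_H$, it sets $\Lambda = \operatorname{diag}(\bv)$ using the top eigenvector $\bv$ of $A$, observes the support-disjointness gives $\langle A, \Lambda(\lambda I - A_\Gamma)\Lambda^{*}\rangle = 0$, and then expands $A$ spectrally. Each term is a quadratic form $(\bv^i \circ \bv)^{*}(\lambda I - A_\Gamma)(\bv^i \circ \bv)$; for $i\ge 2$ the vector $\bv^i \circ \bv$ is orthogonal to $\mathbf 1$, so the expander mixing lemma confines this form to $[0, 2\lambda\|\bv^i\circ\bv\|_2^2]$, while for $i=1$ the vector $\bv\circ\bv$ has a fixed $\frac1n\mathbf 1$ component that picks up the spectral gap $d-\lambda$. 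Balancing the $i=1$ term against the at most $k$ terms with $\lambda_i(A) \ge 0$ (the negative-eigenvalue terms have the right sign to be discarded), and using the $\|\bv\|_\infty$ bound throughout, yields $(d+\lambda)/(8\lambda) \le k^3$, i.e.\ $n_{\ge 0}(G) \ge \bigl((d+\lambda)/8\lambda\bigr)^{1/3}$. You would still want to try to find some version of your $\operatorname{tr}(A_H^3) = 0$ idea, but note the paper never needs sign-balancedness of the $\mu_l$ or any cubic trace identity; the triangle-freeness of $H$ is used only to ensure $\alpha(G)=2$, not inside the inertia estimate itself.
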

In this paper, we confirm this conjecture by proving the following theorem. 
\begin{thm}
\label{thm:main}
For every positive integer $n$, there exists a graph $G_n$ on $n$ vertices such that $\alpha(G_n) \leq 2$ and $n_{\geq 0}(G_n) = \Omega(n^{1/9})$.
\end{thm}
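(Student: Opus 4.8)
The plan is to take $\overline{G_n}$ to contain a triangle-free spectral expander. Two reductions come first. Since an independent set of size $3$ in a graph is exactly a triangle in its complement, $\alpha(G_n)\le 2$ is equivalent to $\overline{G_n}$ being triangle-free. Moreover $n_{\ge 0}$ is monotone under induced subgraphs: if $A$ is a weighted adjacency matrix of $G$ and $S\subseteq V(G)$, then $A[S]$ is a weighted adjacency matrix of $G[S]$ and Cauchy interlacing gives $n_{\ge 0}(A[S])\le n_{\ge 0}(A)$, so $n_{\ge 0}(G[S])\le n_{\ge 0}(G)$. Hence if $\overline{G_n}$ is triangle-free and contains, as a subgraph on a set $W$, a graph $H$, then $G_n[W]$ is a subgraph of $\overline H$, so every weighted adjacency matrix of $G_n[W]$ is also one of $\overline H$, and therefore $n_{\ge 0}(G_n)\ge n_{\ge 0}(G_n[W])\ge n_{\ge 0}(\overline H)$. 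It thus suffices to produce, for every large $n$, a triangle-free $d$-regular graph $H$ on $m=\Theta(n)$ vertices that is a strong spectral expander with $n_{\ge 0}(\overline H)=\Omega(n^{1/9})$, placing the remaining vertices of $\overline{G_n}$ as isolated vertices (this preserves triangle-freeness and, by monotonicity, does not decrease $n_{\ge 0}$).

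The crux, and the main obstacle, is the observation that strong spectral expansion of $\overline G$ forces $n_{\ge 0}(G)$ to be large; reduced as above, one must lower bound $t:=n_{\ge 0}(A)$ over all weighted adjacency matrices $A$ of $\overline H$, with $H$ being $d$-regular and having second eigenvalue $\lambda(H)=O(\sqrt d)$. The usable structure is that $A$ vanishes on the diagonal and on $E(H)$: thus $\tr(A)=0$ and $\langle A,A_H\rangle=0$ in the Frobenius inner product, and, since adjacent vertices of a triangle-free graph have no common neighbour, $A_H^{\,2}-dI$ is itself a weighted adjacency matrix of $\overline H$ (which via the inertia bound already forces $\lambda(H)\ge\sqrt d$). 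Decomposing $A=A_+-A_-$ into its nonnegative and negative spectral parts, we have $A_\pm\succeq 0$, $A_+A_-=0$, $\mathrm{rank}(A_+)\le t$, and $A_+=A_-$ on the diagonal and on $E(H)$; equivalently, the Gram vectors realizing $A_+$ (in $\C^{\le t}$) and those realizing $A_-$ (in $\C^{m-t}$) have equal norms and equal inner products along every edge of $H$. The strategy is to show that a small $t$ makes this impossible against the pseudorandomness of $H$: one estimates $\langle A_\pm,A_H^{\,j}\rangle$ for small $j$ by expander-mixing-type bounds, and combines these with $\tr(A)=0$, $\langle A,A_H\rangle=0$, and the elementary bound $\sum_i(u_i^{*}Au_i)^2\le\tr(A^2)$ for the Rayleigh quotients of $A$ along an eigenbasis $(u_i)$ of $A_H$, to convert the spectral gap into a lower bound on $\mathrm{rank}(A_+)$ and hence on $t$; the resulting estimate has the shape $n_{\ge 0}(\overline H)=\Omega(d^{\,c})$ for an absolute constant $c>0$. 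Carrying this out with the best possible $c$ is the delicate part, and it is precisely this loss that pins the final exponent at $1/9$.

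It remains to choose $H$, and some care is required. Highly structured bipartite expanders are fatal: for the incidence graph of a generalized polygon, the adjacency matrix has, apart from $\pm d$, only the eigenvalues $\pm\sqrt{d-1}$, so $A_H^{\,2}-dI$ has exactly two nonnegative eigenvalues and $n_{\ge 0}(\overline H)=\alpha(\overline H)=2$. One instead wants a non-bipartite, genuinely pseudorandom triangle-free graph --- so that $\overline H$ really is near-complete --- of degree as large as triangle-freeness permits. That degree is capped at $d=O(m^{2/3})$, since the expander mixing lemma applied to a vertex neighbourhood, which is an independent set of size $d$ by triangle-freeness, forces $d^2/m\lesssim\lambda(H)=O(\sqrt d)$. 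Taking $H$ to be a pseudorandom triangle-free graph at this optimal density --- so $d=\Theta(m^{2/3})$ and $\lambda(H)=\Theta(\sqrt d)=\Theta(m^{1/3})$, for instance an Alon-type construction of a triangle-free pseudorandom graph --- and substituting into the observation above yields $n_{\ge 0}(\overline H)=\Omega(m^{1/9})=\Omega(n^{1/9})$. As such a graph exists on $\Theta(n)$ vertices for every $n$, with isolated vertices added as above when the sizes do not match exactly, this completes the plan.
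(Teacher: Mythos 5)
Your high-level plan matches the paper's exactly: force $\overline{G_n}$ to contain a triangle-free $(m,d,\lambda)$-graph $H$, reduce via Cauchy interlacing and padding to lower bounding $n_{\geq 0}(\overline{H})$, and take $H$ to be an Alon-type triangle-free pseudorandom graph at the optimal density $d = \Theta(m^{2/3})$, $\lambda = \Theta(\sqrt{d})$. Your reductions are correct, as are the side observations that bipartite expanders fail and that triangle-freeness caps $d$ at $O(m^{2/3})$.

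The gap is that the load-bearing claim --- that a good expander in $\overline{G}$ forces $n_{\geq 0}(G) = \Omega((d/\lambda)^{1/3})$ --- is not proved but only announced. You list constraints ($\tr(A) = 0$, $\langle A, A_H\rangle = 0$, $\sum_i(u_i^*Au_i)^2 \leq \tr(A^2)$) and assert that ``expander-mixing-type bounds\ldots convert the spectral gap into a lower bound on $\mathrm{rank}(A_+)$,'' but give no mechanism. As written the sketch cannot succeed: every constraint you list is homogeneous under the rescaling $A \mapsto cA$ (for $c>0$), which also leaves $n_{\geq 0}(A)$ unchanged, so by themselves they cannot yield a quantitative lower bound. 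The missing ingredient --- which the paper supplies first and you omit entirely --- is the Kwan--Wigderson regularization (Lemma~\ref{lem:make-em-regular}, built on \cite[Lemma~2.7]{Kwan2024}), allowing one to assume every row of $A$ has unit $L^2$-norm. That normalization is what fixes $\tr(A^2) = n$, forces $\lambda_1(A) \geq \sqrt{n}/(2k)$ with $k = n_{\geq 0}(A)$, and, crucially, gives the $L^\infty$ bound $\norm{\bv}_\infty \leq 2k/\sqrt{n}$ on $A$'s top eigenvector. The paper then conjugates $\lambda I - A_\Gamma$ by $\Lambda = \operatorname{diag}(\bv)$, expands $\langle A, \Lambda(\lambda I - A_\Gamma)\Lambda^*\rangle = 0$ against $A$'s spectral decomposition, and exploits the orthogonality of each Hadamard product $\bv^i\circ\bv$ (for $i\geq 2$) to $\mathbf{1}$ so that the $d$-eigenspace of $A_\Gamma$ drops out. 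Nothing in your $A = A_+ - A_-$ strategy reproduces or replaces this device. Until you actually establish the key inequality (in effect, Theorem~\ref{thm:ndlambda1}), you have a plan for a proof, not a proof.
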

Thus, we show that $n_{\geq 0}(G)$ cannot be upper bounded by any function of $\alpha(G)$, giving a strongly negative answer to Godsil's question.


As we are unaware of any existing bound that can be used to prove Theorem~\ref{thm:main}, we introduce a new lower bound for \( n_{\geq 0}(G) \). We recall the classical definition of spectral expanders.
\begin{defn}
An \emph{\((n, d, \lambda)\)-graph} is a \(d\)-regular graph \(G\) on \(n\) vertices such that
\[
\max\{ |\lambda_2(G)|, |\lambda_n(G)| \} \leq \lambda,
\]
where \(\lambda_1(G) = d \geq \lambda_2(G) \geq \cdots \geq \lambda_n(G)\) are the eigenvalues of the unweighted adjacency matrix $A_G$ of \(G\).
\end{defn}

We refer to the survey of Hoory, Linial and Wigderson~\cite{Hoory2006} for a comprehensive introduction to expander graphs and their applications. Our next theorem shows that the inertia of $G$ is large when its complement graph $\overline{G}$ contains a good spectral expander.
\begin{thm}\label{thm:ndlambda1}
    Suppose $G = (V, E)$ is an $n$-vertex graph, such that its complement contains an $(n, d, \lambda)$-graph $\Gamma$ as a spanning subgraph with $d < \frac{n}{2} - 1$. Then we have
    $$n_{\geq 0}(G) \geq  \left(\frac{d + \lambda}{8\lambda}\right)^{1/3}.$$
\end{thm}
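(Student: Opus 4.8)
The plan is to fix an arbitrary Hermitian weighted adjacency matrix $A$ of $G$ and bound $n_{\ge 0}(A)$ from below directly. The only property of $A$ I will use is the ``new observation'' behind the theorem: since $\Gamma$ is a spanning subgraph of $\overline G$, every edge of $\Gamma$ is a non-edge of $G$, so $A_{ij}=0$ whenever $ij\in E(\Gamma)$ and $A_{ii}=0$. Equivalently, $A$ is Hadamard-orthogonal to $A_\Gamma$ and to every diagonal matrix; in particular $\tr(AA_\Gamma)=0$.

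First I would set up the spectral decomposition $A=A_+-M$ into its nonnegative and negative parts, so that $A_+\succeq 0$ with $\operatorname{rank}A_+\le n_{\ge 0}(A)=:p$, $M\succeq 0$, $A_+M=MA_+=0$, and (using $\tr A=0$) $\tr A_+=\tr M$; after rescaling $A$, which does not affect the inertia, I assume $\tr A_+=\tr M=1$. Since $A$ vanishes on $E(\Gamma)$ and on the diagonal, $A_+$ and $M$ have identical entries there. One may also assume $p<n/2$: otherwise \emph{every} weighted adjacency matrix of $G$ has more than $n/2$ nonnegative eigenvalues, and since $\bigl(\tfrac{d+\lambda}{8\lambda}\bigr)^{1/3}\le (d/4)^{1/3}<n/2$ the bound is already proved; this puts us in the ``small $p$'' regime, where moreover $\operatorname{rank}M=n-p\ge p$.

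The second step would inject the expansion through the mixing decomposition $A_\Gamma=\tfrac dn J+B$, where $J=\mathbf 1\mathbf 1^{\!\top}$, $B\mathbf 1=0$ and $\|B\|_{\mathrm{op}}\le\lambda$. From $\tr(AA_\Gamma)=0$ one gets $\tfrac dn\,\mathbf 1^{\!\top}A\mathbf 1=-\tr(AB)$, and since $A_+,M\succeq 0$ the H\"older-type estimate $|\tr(A_+B)|\le\|B\|_{\mathrm{op}}\tr A_+\le\lambda$ (and likewise for $M$) gives $|\tr(AB)|\le 2\lambda$, hence $|\mathbf 1^{\!\top}A\mathbf 1|\le 2n\lambda/d$: the total weight of $A$ is small. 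Feeding this back --- together with the fact that the compression of $A$ to $\mathbf 1^{\perp}$ still has at most $p$ nonnegative eigenvalues, so its positive part has trace $O(p\|A\|_{\mathrm{op}})$ --- lets one control the remaining first-order quantities attached to the $\mathbf 1$-direction. The recurring mechanism is that $\tr(AA_\Gamma)=0$ cancels the otherwise dominant $\tfrac dn J$-contribution, leaving errors of size $O(\lambda)$ rather than $O(d)$.

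The heart of the argument, which I expect to be the main obstacle, is to exploit that $A_+$ has rank only $p$. Rank $\le p$ and trace $1$ force a lower bound of order $1/p^{2}$ on an appropriate cubic trace quantity built from $A_+$ (for instance $\tr(A_+^{3})\ge (\tr A_+)^{3}/(\operatorname{rank}A_+)^{2}$). Expanding that same quantity against $A_\Gamma=\tfrac dn J+B$, discarding the $J$-terms via $\tr(AA_\Gamma)=0$ and the smallness of $\mathbf 1^{\!\top}A\mathbf 1$, and estimating the residual $B$-terms using $\|B\|_{\mathrm{op}}\le\lambda$ together with the rank bound on $A_+$, should produce a matching upper bound of order $p\lambda/(d+\lambda)$. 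Comparing the two estimates yields $d+\lambda\le 8p^{3}\lambda$, which rearranges to $n_{\ge 0}(A)=p\ge\bigl(\tfrac{d+\lambda}{8\lambda}\bigr)^{1/3}$. The two technical hazards are: choosing the cubic quantity so that \emph{both} estimates survive --- the naive choices are swamped by the $\mathbf 1$-direction, so one must work systematically modulo $\mathbf 1$ while retaining the factor of $d$ that makes the inequality meaningful --- and the bookkeeping of zero eigenvalues of $A$, handled either by a small perturbation of $A$ within the weighted adjacency matrices of $G$ (pushing zero eigenvalues negative without increasing $n_{\ge 0}$) or by carrying the exact identity $\operatorname{rank}M=n-p$ through the computation.
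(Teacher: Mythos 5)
Your proposal takes a genuinely different route from the paper, but the central step is missing, and I do not believe it can be completed as outlined.

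The paper's proof first uses the Kwan--Wigderson scaling lemma to pass to a $1$-regular matrix $A$ (each row of unit $L^2$-norm), derives $\lambda_1(A)\ge\sqrt n/(2k)$ and hence $\|\bv\|_\infty\le 2k/\sqrt n$ for the top eigenvector $\bv$, and then exploits the vanishing of the Frobenius inner product $\langle A,\Lambda(\lambda I-A_\Gamma)\Lambda^*\rangle$ with $\Lambda=\operatorname{diag}(\bv)$. The crucial trick is that for $i\ge 2$ the Hadamard products $\bv^i\circ\bv$ are orthogonal to $\mathbf 1$, so the expander spectrum of $A_\Gamma$ controls each term, while the $i=1$ term picks up the $d$-dependence from the $J$-component. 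Your approach instead decomposes $A=A_+-M$, normalizes by $\tr A_+ =1$, and tries to play a power-mean lower bound $\tr(A_+^3)\ge 1/p^2$ against a hoped-for upper bound of order $p\lambda/(d+\lambda)$. These are not the same argument: you avoid the scaling lemma entirely, never introduce the top eigenvector, and work at the level of global traces rather than the rank-one pieces of the spectral decomposition.

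The gap is precisely where you flag ``the main obstacle,'' and I think it is fatal rather than merely technical. All structural information you have is about $A=A_+-M$ (vanishing on $E(\Gamma)$ and the diagonal, $\tr(AA_\Gamma)=0$), not about $A_+$ on its own. As an unconstrained rank-$\le p$, trace-one PSD matrix, $A_+$ can have $\tr(A_+^3)$ as large as $1$, so any upper bound of order $p\lambda/(d+\lambda)$ must draw heavily on the support constraint, and you never say how. ``Expanding $\tr(A_+^3)$ against $A_\Gamma$'' has no evident meaning: $\tr(A_+^3)$ is not a bilinear pairing with $A_\Gamma$, and $A_+^2$, $A_+^3$ are not supported in $\overline{E(\Gamma)}$, so the orthogonality to $A_\Gamma$ does not propagate. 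More pointedly, the only place the parameter $d$ enters your computation is via $|\mathbf 1^{\!\top}A\mathbf 1|\le 2n\lambda/d$, but $\mathbf 1^{\!\top}A_+\mathbf 1$ and $\mathbf 1^{\!\top}M\mathbf 1$ individually are not controlled, only their difference is; a global trace cannot see the $J$-direction term by term. This is exactly the problem the paper's Hadamard-product maneuver solves: conjugating by $\Lambda$ localizes the expansion constraint to vectors $\bv^i\circ\bv$ that are provably orthogonal to $\mathbf 1$, which is what allows one to ``discard the $J$-terms'' in a rigorous way. Without some analogue of that localization, the factor of $d$ (rather than merely $\lambda$) does not appear in your upper bound, and the argument stalls.

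Two minor points. First, your reduction to $p<n/2$ implicitly uses $\lambda\ge 1$ when writing $(d+\lambda)/(8\lambda)\le d/4$; this is true for the graphs used in the applications but is not granted by the hypotheses, so you should either assume it or argue directly that $(\tfrac{d+\lambda}{8\lambda})^{1/3}<n/2$ always holds under $d<n/2-1$. Second, the paper does not in fact need $p<n/2$; it uses the minimum-degree condition $\delta(\overline\Gamma)>n/2$ only to invoke the scaling lemma, and then the inequality $\sum_{i\le k}\lambda_i/\lambda_1\le k$ at the very end. If you do manage to repair the cubic step, you should check that your ``$\mathrm{rank}\,M=n-p\ge p$'' normalization is actually used --- as written it plays no role.
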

Combining this theorem with Alon's celebrated construction of triangle-free pseudorandom graphs~\cite{Alon1994}, we obtain Theorem~\ref{thm:main}. Our result parallels the classical ratio bound \cite{Lovasz79} for the Lov\'{a}sz theta function. A notable difference is that our proof relies on bounds for both the second and the least eigenvalue of the complement, while the ratio bound only needs a bound on the least eigenvalue. 

We also investigate how existing lower bounds on \( n_{\geq 0}(G) \) relate to Theorem~\ref{thm:main}. In 2019, Wocjan and Elphick~\cite[Theorem~6 and Remark~7]{Wocjan2019} established the following lower bound for the inertia in terms of variants of the chromatic number:
\begin{equation}
\label{eq:existing-bounds}
n_{\geq 0}(G) \geq \frac{|G|}{\xi_f(G)} \geq \frac{|G|}{\xi(G)},
\end{equation}
where \( |G| \), \( \xi(G) \), and \( \xi_f(G) \) denote the number of vertices, the orthogonal rank, and the projective rank of \( G \), respectively; see Appendix~\ref{sec:orthogonal-rank} for precise definitions of \( \xi(G) \) and \( \xi_f(G) \). By combining this lower bound with a classical construction of nearly orthogonal vector sets due to Alon and Szegedy~\cite{AlonSzegedy1999}, we obtain the following weaker version of Theorem~\ref{thm:main}, in which the constant 2 is replaced by a larger absolute constant.

\begin{prop}
    \label{prop:large-indep}
    There exists an absolute constant $C > 0$ such that for every positive integers $n \geq 2$ and $k \leq n$, there exists an $n$-vertex graph $G$ satisfying $$\alpha(G) \leq k \quad \text{ and }\quad  n_{\geq 0}(G) \geq n^{1 - \frac{C \log \log(k + 2)}{\log(k + 2)}}.$$
    In particular, for every $n$, there exists an $n$-vertex graph $G$ such that 
    $$\alpha(G) = O(1) \quad \text{ and }\quad  n_{\geq 0}(G) \geq n^{0.99}.$$
    There also exists an $n$-vertex graph $G$ such that 
    $$\alpha(G) = n^{o(1)} \quad \text{ and }\quad  n_{\geq 0}(G) \geq n^{1 - o(1)}.$$
\end{prop}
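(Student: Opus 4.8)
The strategy is to combine the lower bound $n_{\geq 0}(G) \geq |G|/\xi(G)$ from \eqref{eq:existing-bounds} with a construction of a graph on $n$ vertices whose orthogonal rank $\xi(G)$ is small while its independence number $\alpha(G)$ is controlled. Recall that $\xi(G) \leq t$ means there is an assignment of nonzero vectors in $\C^t$ (or $\mathbb{R}^t$) to the vertices so that adjacent vertices receive orthogonal vectors; equivalently, nonadjacent vertices (and each vertex with itself) receive non-orthogonal vectors. So the goal is: build a graph on $n$ vertices, living in dimension $t$ as small as we can afford, such that any $k+1$ of the assigned vectors contain two that are orthogonal—this forces $\alpha(G) \leq k$—and $t$ is only $n^{o(1)}$ when $k$ is a large constant, and more generally $t \leq n^{C\log\log(k+2)/\log(k+2)}$.

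The key ingredient is the Alon--Szegedy~\cite{AlonSzegedy1999} construction of a large family of ``nearly orthogonal'' vectors: for a suitable dimension $t$, one obtains $N = t^{\Omega(\log t / \log\log t)}$ unit vectors in $\mathbb{R}^t$ such that among any $k+1$ of them, some pair is exactly orthogonal (this is the standard set-system / polynomial-method construction, where vectors are indexed by subsets and orthogonality is engineered via intersection conditions mod a prime). Taking $n \leq N$ of these vectors and declaring two vertices adjacent exactly when their vectors are orthogonal yields a graph $G$ on $n$ vertices with $\xi(G) \leq t$ and $\alpha(G) \leq k$ by construction. Inverting the relation $N = t^{\Omega(\log t/\log\log t)}$ to solve for $t$ in terms of $n$ gives $t = n^{O(\log\log n / \log\log\log n)}$ when $k$ is fixed; more carefully tracking how the construction's parameters depend on $k$ gives $t \leq n^{C\log\log(k+2)/\log(k+2)}$, and then \eqref{eq:existing-bounds} yields $n_{\geq 0}(G) \geq |G|/\xi(G) \geq n/t \geq n^{1 - C\log\log(k+2)/\log(k+2)}$, as claimed. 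The two displayed ``in particular'' statements follow by specializing: taking $k$ to be a sufficiently large absolute constant makes the exponent exceed $0.99$, and taking $k = k(n) \to \infty$ slowly (e.g.\ $k = \log n$) gives $\alpha(G) = n^{o(1)}$ with exponent $1 - o(1)$.

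The main obstacle is bookkeeping rather than conceptual: one must extract from the Alon--Szegedy construction the precise quantitative trade-off between the ambient dimension $t$, the number of vectors $N$, and the ``girth-type'' parameter $k$ governing how many vectors are needed before an orthogonal pair is forced, and then invert this trade-off cleanly. In particular I would need to confirm that the construction genuinely produces \emph{exact} orthogonality (not just near-orthogonality) for the relevant pairs—this is why \cite{AlonSzegedy1999} works over $\mathbb{F}_p$ and the orthogonal rank bound applies verbatim—and to handle the regime where $n$ is much smaller than the maximal $N$ by simply discarding surplus vectors, which only helps. A minor point is to ensure nonadjacent vectors are genuinely non-orthogonal (so that the graph we define has exactly the claimed edge set and $\xi(G) \leq t$ really holds with this vector assignment); in the Alon--Szegedy setup the diagonal inner products are nonzero by normalization and non-orthogonality of ``most'' pairs is part of the construction, so this is automatic.
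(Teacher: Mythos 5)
Your proposal matches the paper's proof essentially verbatim: both invoke the Alon--Szegedy family of $d^{c\log(k+2)/\log\log(k+2)}$ vectors in $\mathbb{R}^d$ with no $k+1$ pairwise non-orthogonal members, take $n$ of them as vertices with adjacency given by orthogonality, and apply the Wocjan--Elphick bound $n_{\geq 0}(G)\geq |G|/\xi(G)$ after choosing $d \approx n^{\log\log(k+2)/(c\log(k+2))}$. One small point: your worry that ``nonadjacent vectors must be genuinely non-orthogonal'' is unnecessary, since the definition of orthogonal rank only requires adjacent vertices to receive orthogonal vectors, so declaring every orthogonal pair to be an edge automatically makes the assignment a valid orthogonal representation.
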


In the second paragraph of~\cite[p.~3200]{Kwan2024}, Kwan and Wigderson wrote: \begin{quote}
\emph{We also remark that if one is interested in the largest possible multiplicative gap between
the independence number and the inertia bound, ...}
\end{quote}and subsequently treated this as an open problem. Note that the final part of Proposition~\ref{prop:large-indep} shows that the multiplicative gap between the independence number and the inertia bound can be as large as \( n^{1 - o(1)} \). Therefore, up to an \( n^{o(1)} \) factor, our result resolves this open problem.

On the other hand, Rosenfeld \cite{Rosenfeld} showed that $\alpha(G) = 2$ implies $\xi(G) \geq \frac{|G|}{2}$. We now show that even the strongest bound in \eqref{eq:existing-bounds}, which involves the projective rank \( \xi_f(G) \), does not yield a nontrivial bound when $\alpha(G) = 2$.
\begin{prop}\label{prop:projective-independence-2}
Let \( G \) be a graph on \( n \) vertices with \( \alpha(G) = 2 \). Then \( \xi_f(G) \geq \frac{n}{2} \).
\end{prop}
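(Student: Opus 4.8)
The plan is to reduce the statement to Rosenfeld's inequality $\xi(G') \geq \tfrac12 |V(G')|$ for graphs $G'$ with $\alpha(G') = 2$, applied not to $G$ itself but to its clique blow-ups. For an integer $r \geq 1$, let $G[K_r]$ denote the graph obtained from $G$ by replacing each vertex with a clique on $r$ vertices and each edge with a complete bipartite graph; explicitly, $V(G[K_r]) = V(G) \times [r]$, and $(u,i)$ is adjacent to $(v,j)$ iff either $u = v$ and $i \neq j$, or $u \neq v$ and $uv \in E(G)$. Recall that a projective representation of $G$ of value $d/r$ consists of rank-$r$ orthogonal projections $P_v$ on $\C^d$ with $P_u P_v = 0$ whenever $uv \in E(G)$, and that $\xi_f(G)$ is the infimum of $d/r$ over all such representations (so that $r = 1$ recovers the orthogonal rank $\xi$). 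I will show that every value-$(d/r)$ projective representation of $G$ produces a $d$-dimensional orthonormal representation of $G[K_r]$, so that $\xi(G[K_r]) \leq d$, and then conclude via Rosenfeld; this is one direction of the standard identity $\xi_f(G) = \inf_{r} \xi(G[K_r])/r$, but only this direction is needed.

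The first step is the elementary observation that clique blow-ups preserve the independence number: $\alpha(G[K_r]) = \alpha(G)$, since an independent set of $G[K_r]$ meets each clique-copy in at most one vertex and projects onto an independent set of $G$ of the same size. In particular $\alpha(G[K_r]) = 2$, so Rosenfeld's theorem \cite{Rosenfeld} applied to $G[K_r]$ gives $\xi(G[K_r]) \geq \tfrac12 |V(G[K_r])| = \tfrac{nr}{2}$.

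The second step constructs the orthonormal representation of $G[K_r]$ from the projections $P_v$. For each $v$ fix an orthonormal basis $y_{v,1}, \dots, y_{v,r}$ of the range of $P_v$, and assign the unit vector $y_{v,i} \in \C^d$ to the vertex $(v,i)$ of $G[K_r]$. Vectors attached to the same clique-copy are orthonormal by construction; and if $uv \in E(G)$ then $P_u P_v = 0$, which (because $P_u$ and $P_v$ are Hermitian projections) forces the ranges of $P_u$ and $P_v$ to be orthogonal, hence $\langle y_{u,i}, y_{v,j}\rangle = 0$ for all $i,j$. Thus adjacent vertices of $G[K_r]$ receive orthogonal vectors, so $\xi(G[K_r]) \leq d$. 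Combining this with the previous paragraph yields $d \geq nr/2$ for every projective representation of $G$, and taking the infimum over all representations gives $\xi_f(G) \geq n/2$.

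There is no genuine analytic difficulty here, and the only real idea is the choice of blow-up: one must use the clique blow-up $G[K_r]$ rather than the independent-set blow-up, precisely so that the arbitrarily chosen orthonormal bases of the ranges of the $P_v$ become legitimate edges of the blown-up graph — on which an orthonormal representation is permitted to impose orthogonality — while the hypothesis $\alpha(G) = 2$ is simultaneously inherited. If one prefers to avoid invoking Rosenfeld as a black box, the same proof can be run directly on the $nr \times nr$ block Gram matrix $I + \mathbf{C}$, where $\mathbf{C}$ has $(u,v)$-block equal to $Q_u^* Q_v$ with $Q_v$ an isometry onto the range of $P_v$; this $\mathbf{C}$ is a (matrix-weighted) adjacency matrix of the triangle-free graph $\overline{G}[\,\overline{K_r}\,]$, the matrix $I + \mathbf{C}$ is positive semidefinite of rank at most $d$, and Rosenfeld's argument bounding the multiplicity of the eigenvalue $-1$ of $\mathbf{C}$ carries over to give $\operatorname{rank}(I + \mathbf{C}) \geq nr/2$.
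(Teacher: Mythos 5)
Your proof is correct, and it takes a genuinely different route from the paper's. The paper's argument is a direct one: it sets $X = \sum_{v\in V} P_v$, uses $\alpha(G)=2$ to force $\tr(P_uP_vP_w)=0$ for all pairwise distinct $u,v,w$, derives $\tr(X^3) = 3\tr(X^2) - 2\tr(X)$, and then applies the pointwise inequality $\lambda^3 - 3\lambda^2 + 2\lambda \geq 2\lambda - 4$ (equivalently $(\lambda-2)^2(\lambda+1)\ge 0$) to the eigenvalues of the positive semidefinite matrix $X$ to conclude $\tr(X) \leq 2d$, i.e.\ $rn \leq 2d$. Your proof instead black-boxes Rosenfeld's inequality $\xi(G') \ge |G'|/2$ for graphs with $\alpha(G')=2$, feeds it the clique blow-up $G[K_r]$ (using that clique blow-ups preserve both the independence number and the bound $\xi(G[K_r])\le d$ coming from any $d/r$-representation), and recovers the claim by optimizing over $r$; this is one direction of the standard identity $\xi_f(G) = \inf_r \xi(G[K_r])/r$ from Roberson's thesis. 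The two are close relatives: unwinding Rosenfeld's Gram-matrix proof through your reduction essentially reproduces the paper's trace computation in block form (as you yourself indicate in the final paragraph). What the paper's version buys is self-containment and a cleaner computation that never picks bases or mentions blow-ups; what your version buys is a more conceptual explanation of why the hypothesis $\alpha(G)=2$ is doing the work, namely that it is the same hypothesis that drives Rosenfeld's theorem and survives the clique blow-up.

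One small point worth keeping in mind: the paper's definition of orthogonal rank uses unit vectors in $\mathbb{R}^d$, while the definition of projective rank does not fix a scalar field, and your construction produces vectors in $\C^d$ if the projectors are complex. This is harmless because Rosenfeld's Gram-matrix argument works verbatim over $\C$ (the Gram matrix is still Hermitian PSD of rank at most $d$), and your closing remark about running the argument directly on the block Gram matrix $I + \mathbf{C}$ makes this explicit; but if you intend to cite Rosenfeld as a black box you should either note that his bound holds over $\C$ as well, or work with real projectors throughout.
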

\begin{remark}
The inequality in Proposition~\ref{prop:projective-independence-2} is tight. This follows from a result of Godsil and Sobchuk~\cite[Lemma~9.3]{Godsil2024}, who showed that \( \alpha(G) = 2 \) if and only if \( \alpha_q(G) = 2 \), where $\alpha_q(G)$ is the quantum independence number. Combined with~\cite[Corollary~6.13.2]{Mancinska}, it follows that if \( G \) is a vertex-transitive graph on \( n \) vertices with \( \alpha(G) = 2 \), then \( \xi_f(G) = \frac{n}{2} \).
\end{remark}

We remark, however, that since the Alon--Boppana bound (\cite{Al}, \cite{Ni}, \cite{Ni1}) implies \( \lambda = \Omega(d^{1/2}) \), Theorem~\ref{thm:ndlambda1} cannot yield a bound better than \( n_{\geq 0}(G) = \Omega(n^{1/6}) \) for any graph \( G \). In contrast, the moment method of Kwan and Wigderson, as well as the orthogonal rank bound in~\eqref{eq:existing-bounds}, can establish that \( n_{\geq 0}(G) = n^{1 - o(1)} \) for certain graphs \( G \), at the cost of much stronger assumptions on $G$.



We denote the \emph{Shannon capacity} by \( \Theta(G) \), and the \emph{Lov{\'a}sz theta function} by \( \vartheta(G) \); for a comprehensive introduction, see~\cite[Section~11]{Lovasz19}. It is proved in \cite[Eq.~(11.56) and Proposition~11.31]{Lovasz19} that \( \alpha(G) \le \Theta(G) \le \vartheta(G) \) for all graphs \( G \), thereby establishing the Shannon capacity and the Lov{\'a}sz theta function as powerful upper bounds on the independence number. Since \( \alpha(G) \le n_{\geq 0}(G) \) also holds by the inertia bound, it is natural to investigate the relationship between \( n_{\geq 0}(G) \), \( \Theta(G) \), and \( \vartheta(G) \).

A result of Alon~\cite[Theorem~3.2]{Alon2019} showed that \( \vartheta(G) \) is not upper bounded by any function of \( \Theta(G) \). As another application of Theorem~\ref{thm:ndlambda1}, we provide a parallel result showing that $n_{\geq 0}(G)$ cannot be upper bounded by any function of $\Theta(G)$.
\begin{thm}
\label{thm:main-shannon}
There exists a sequence of graphs $H_n$ on $n$ vertices such that $\Theta(H_n) \leq 3$ and $n_{\geq 0}(H_n) = \Omega(n^{1/12})$.
\end{thm}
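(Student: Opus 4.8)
The plan is to combine Theorem~\ref{thm:ndlambda1} with the construction that Alon used to show $\vartheta$ is not bounded in terms of $\Theta$, exactly as Theorem~\ref{thm:main} was deduced from Theorem~\ref{thm:ndlambda1} together with Alon's triangle-free pseudorandom graphs. The target object is a graph $H_n$ whose complement contains a good $(n,d,\lambda)$-expander $\Gamma$ — this forces $n_{\geq 0}(H_n)$ to be large by Theorem~\ref{thm:ndlambda1} — while $H_n$ itself has Shannon capacity at most $3$. The natural source is Alon's family of graphs with $\Theta$ bounded but $\vartheta$ large: one takes a small base graph $B$ on $b$ vertices with $\Theta(B)$ small (so that some power $B^{\boxtimes t}$ has a nice structure), and then forms a graph on $n$ vertices by a pseudorandom "blow-up"/substitution construction using an explicit expander so that the complement is itself a strong expander.

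Concretely, I would proceed as follows. First, recall (or re-derive) the Alon construction: there is a sequence of graphs $G_m$ on $m$ vertices with $\Theta(G_m) \le 3$ whose complements are $(m, d, \lambda)$-graphs with $d = \Theta(m)$ bounded away from $m/2$ and $\lambda = O(\sqrt{d})$; the bound $\Theta(G_m)\le 3$ follows because $G_m$ is built as a substitution of copies of a fixed $5$-vertex graph (or a similar small gadget) whose third strong power has independence number controlled, via the standard fact that Shannon capacity is sub-multiplicative under substitution and a counting argument on the independent sets. Second, apply Theorem~\ref{thm:ndlambda1} with $\Gamma = \overline{G_m}$: since $d + \lambda \ge d = \Theta(m)$ and $\lambda = O(\sqrt d) = O(\sqrt m)$, we get
\[
n_{\geq 0}(G_m) \ge \left(\frac{d+\lambda}{8\lambda}\right)^{1/3} = \Omega\!\left(\left(\frac{m}{\sqrt m}\right)^{1/3}\right) = \Omega(m^{1/6}).
\]
This already gives $n_{\geq 0}(H_n) = \Omega(n^{1/6})$, which is better than the claimed $\Omega(n^{1/12})$; the weaker exponent $1/12$ in the statement presumably reflects an extra squaring or tensoring step needed to simultaneously push $\Theta$ down to $3$ (rather than some larger constant) while keeping the complement pseudorandom, so in the write-up I would track whether a single additional strong-product or substitution step costs a factor of $2$ in the exponent and state the honest bound that comes out. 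Third, verify $\Theta(H_n) \le 3$ rigorously: bound $\Theta(H_n)$ by $\vartheta$-type or capacity-type submultiplicativity only where it is legitimate — since $\Theta$ is \emph{not} monotone under subgraphs in general, I must use that $H_n$ is obtained by a substitution operation (lexicographic-type product) from a fixed graph, for which $\Theta$ behaves multiplicatively, and that the fixed gadget has $\Theta = 3$ (e.g. $C_5$ has $\Theta = \sqrt5$, so one needs a gadget with capacity exactly a small integer, such as a suitable perfect graph or a small graph with a known capacity).

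The main obstacle is the tension between the two requirements on $H_n$: making the complement a genuine $(n,d,\lambda)$-graph with $\lambda = O(\sqrt d)$ (needed for Theorem~\ref{thm:ndlambda1} to give a polynomial bound) while simultaneously keeping $\Theta(H_n)$ equal to a fixed small constant. Pseudorandom graphs tend to have Shannon capacity close to $\vartheta$, which is large, so $H_n$ cannot itself be pseudorandom; instead its pseudorandomness must be "hidden" in the complement, and the low-capacity structure must come from an explicit algebraic substitution pattern. Checking that these two structures coexist — i.e.\ that substituting a bounded-capacity gadget into an explicit expander yields a graph whose \emph{complement} is still a good expander with controlled second and least eigenvalue — is the delicate eigenvalue computation, done via the tensor/Kronecker description of the adjacency matrix of a substitution product, and this is where I expect the bulk of the work to lie. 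Everything else (the inertia lower bound, and the capacity upper bound for the fixed gadget) is then a direct invocation of Theorem~\ref{thm:ndlambda1} and known small-graph capacity facts.
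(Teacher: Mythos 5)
Your high-level plan --- apply Theorem~\ref{thm:ndlambda1} to the complement of Alon's graphs from \cite{Alon2019} --- is exactly what the paper does, but your description of Alon's construction is substantively wrong, and this error produces an incorrect parameter estimate that leaves you confused about the exponent. Alon's $H_n$ is not a substitution or blow-up of a small bounded-capacity gadget into an expander; rather, $\overline{H_n}$ is an induced subgraph of the polarity graph of the projective plane of order $q$, so it has $n = q^2 - 1$ vertices and is $q$-regular, i.e.\ $d = q = \Theta(\sqrt n)$, not $d = \Theta(n)$ as you assume. The polarity graph has second-largest eigenvalue $\sqrt q$ and smallest eigenvalue $-\sqrt q$, so by Cauchy interlacing $\overline{H_n}$ is an $(n, q, \sqrt q)$-graph, and Theorem~\ref{thm:ndlambda1} gives $n_{\geq 0}(H_n) \geq \bigl((q + \sqrt q)/(8\sqrt q)\bigr)^{1/3} = \Omega(q^{1/6}) = \Omega(n^{1/12})$. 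The exponent $1/12$ is therefore not the result of any ``extra squaring or tensoring step''; it is a direct consequence of the sparsity of $\overline{H_n}$, whose degree is $\Theta(\sqrt n)$. Your claimed $\Omega(n^{1/6})$ rests on a dense expander that Alon's construction does not supply.

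Consequently the ``main obstacle'' you flag --- the eigenvalue analysis of a substitution product, and the tension between complement pseudorandomness and small capacity --- does not arise. Alon's construction already delivers both properties at once; the eigenvalue bounds on $\overline{H_n}$ follow from the known spectrum of the polarity graph together with interlacing, and $\Theta(H_n) \leq 3$ is proved in \cite{Alon2019} by an argument that has nothing to do with submultiplicativity of $\Theta$ under substitution, so it can simply be cited. To repair the proof you should extract three facts from \cite[Theorem~3.2]{Alon2019}: $\Theta(H_n) \leq 3$, the $q$-regularity and order $q^2 - 1$ of $\overline{H_n}$, and the bound $\lambda \leq \sqrt q$; then a single application of Theorem~\ref{thm:ndlambda1} finishes, with no further eigenvalue computation and no capacity argument of your own.
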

The works~\cite{Kwan2024} and~\cite{Ihringer2023} together demonstrate that the inertia and the theta function are incomparable, that is, each can be strictly greater than the other. Specifically, in the first paragraph and footnote of~\cite[p.~3200]{Kwan2024}, Kwan and Wigderson apply~\cite[Theorem~1.7(ii)]{Kwan2024} by taking \( G \) to be an edge-transitive induced subgraph on at least half of the vertices of the polarity graph of a finite projective plane with girth at least 5. This yields the existence of a family of graphs \( G \) such that
\[
n_{\geq 0}(G) = \Omega(|G|), \qquad \vartheta(G) = O(|G|^{3/4}).
\] On the other hand, Ihringer~\cite{Ihringer2023} showed the existence of a family of graphs \( G \) for which
\[
n_{\geq 0}(G) = O(|G|^{1/2}), \qquad \vartheta(G) = \Omega(|G|^{3/4}).
\] While previous works show incomparability, they do not exclude the possibility of a polynomial relationship between $\vartheta(G)$ and $n_{\geq 0}(G)$. In the following theorem, we show that $\vartheta(G)$ can be exponentially larger than $n_{\geq 0}(G)$.
\begin{thm}\label{thm:theta1}
    For each positive integer $k \geq 2$, there exists a graph $G$ with
    $$n_{\geq 0}(G) \leq k^3 + 1, \qquad \vartheta(G) \geq  2^k.$$
\end{thm}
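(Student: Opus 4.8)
The plan is to produce, for each $k \ge 2$, a graph $G = G_k$ for which $\vartheta(G) \ge 2^k$ follows from a short duality argument, while $n_{\ge 0}(G) \le k^3 + 1$ is certified by one explicitly chosen Hermitian weighted adjacency matrix. These two goals pull in opposite directions: a graph with very large $\vartheta$ typically has a complement that behaves like a spectral expander, and by Theorem~\ref{thm:ndlambda1} this would force $n_{\ge 0}(G)$ to be large. So the construction cannot be ``random-like''; it must be algebraically structured, which is also what makes the certifying matrix computable.

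For the lower bound on $\vartheta$, I would use the inequality $\vartheta(G)\,\vartheta(\overline G) \ge n$ (an equality when $G$ is vertex-transitive), so that it suffices to build a vertex-transitive $G$ on $n$ vertices whose complement has small theta function, say $\vartheta(\overline G) \le n / 2^k$. Concretely, I would take $G$ to be a graph arising from an association scheme on $m = \Theta(k)$-ary combinatorial objects --- a Hamming-, Johnson-, or Grassmann-type scheme --- so that $n$ is exponential in $m$, while $\vartheta(\overline G)$ can be evaluated, or bounded from above, using the eigenmatrix of the scheme, i.e.\ by a Delsarte-type linear-programming or ratio-bound computation.

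For the inertia bound, the key observation is that when $G$ is a union of a few relations of the scheme, the Hermitian matrices supported on $E(G)$ form a low-dimensional space, and each such matrix acts on the $i$-th eigenspace of the scheme as a scalar obtained by evaluating a fixed low-degree polynomial in the relation parameters. One then chooses the coefficients so that this scalar is strictly negative on every eigenspace except the few of lowest degree, whose dimensions grow like $1,\ \Theta(m),\ \Theta(m^2),\ \Theta(m^3)$; keeping the eigenspaces of degree at most three nonnegative gives $n_{\ge 0}(G) \le 1 + \binom{m}{1} + \binom{m}{2} + \binom{m}{3} \le k^3 + 1$, and the cube is precisely the size of this ``low-degree'' part. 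I expect the main obstacle to be the simultaneous feasibility of the two steps: one must exhibit a single polynomial, of degree at most the number of relations that $G$ uses, that is negative at all but three of the eigenvalue parameters, while at the same time keeping $\overline G$ sparse enough (few relations) that the Delsarte-type bound still yields $\vartheta(\overline G) \le n/2^k$. Balancing these --- and noting that $\alpha(G) \le k^3 + 1$ then comes for free from the inertia bound, and that Theorem~\ref{thm:ndlambda1} explains why a pseudorandom $\overline G$ must be avoided --- is where the real work lies.
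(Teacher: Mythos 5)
Your high-level framework is in the right spirit — the paper's construction is indeed built on the Johnson association scheme, the certifying matrix is a low-degree ``polynomial in intersection size,'' and the theta bound uses a carefully chosen dual feasible point — but the sketch stops precisely at the step you flag as the ``main obstacle,'' and that is where all the content of the theorem lives. Concretely, the paper takes $G = G(n,k,\{\ell\})$ with $n = k^3+1$ and $\ell = \lfloor k/2\rfloor$, so the vertex set is $\binom{[k^3+1]}{k}$. The certifying matrix is not a degree-three polynomial but the \emph{linear} one: $M_{A,B} = |A\cap B| - \ell$ off-diagonal, $0$ on the diagonal, which factors as $M = UU^\top - (k-\ell)I - \ell J$ where $U$ is the incidence matrix. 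Since $UU^\top$ has rank at most $n$ and $-(k-\ell)I - \ell J$ is negative definite, $n_{\geq 0}(G) \le n = k^3 + 1$ is immediate. So only the degree-$0$ and degree-$1$ harmonics of the scheme (total dimension $n$) are retained, not degrees $0$ through $3$; your count $1 + \binom{m}{1} + \binom{m}{2} + \binom{m}{3} \le k^3+1$ with $m = \Theta(k)$ would force the ground set to have size $\Theta(k)$, which gives far too few vertices for a $2^k$ theta lower bound via a Johnson-type scheme, so the parameter balance you describe does not obviously close.

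The second gap is the theta lower bound. You propose the complement-duality route, $\vartheta(G) \ge |V(G)|/\vartheta(\overline G)$ with a Delsarte LP bound on $\vartheta(\overline G)$; for a vertex-transitive $G$ this is equivalent in principle to bounding $\vartheta(G)$ from below, but it is not carried out, and the feasibility of the LP in the needed parameter regime is exactly the hard step. The paper instead exhibits an explicit dual feasible point (following Linz), namely $a_{k-\ell} = \frac{\binom{k}{k-\ell}}{(k-\ell)(\ell+1)}(n-k)$, and verifies the constraints by an elementary monotonicity argument that crucially needs $n \ge k^3+1$ — which is the origin of the exponent $3$ in the inertia bound. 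That verification (Claim~\ref{claim1} and the two cases after it) is the heart of Proposition~\ref{thm:johnson-theta} and has no analogue in your sketch. So while your proposal correctly identifies the kind of object to look for, it leaves open both the existence of a feasible weighting on the inertia side and the LP computation on the theta side, which together constitute essentially the whole proof.
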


The rest of the paper is organized as follows. In Section~\ref{sec:expanders}, we prove Theorem~\ref{thm:ndlambda1} and establish Theorems~\ref{thm:main} and~\ref{thm:main-shannon} as applications. Section~\ref{sec:lovasztheta} explores the relationship between the inertia and the Lov\'{a}sz theta function, and establishes Theorem~\ref{thm:theta1}. Finally, Section~\ref{sec:conclusion} concludes with several open problems for future research. The proofs of Propositions~\ref{prop:large-indep} and~\ref{prop:projective-independence-2} are deferred to Appendix~\ref{sec:orthogonal-rank}, since they mostly follow from existing results.

\section*{Acknowledgements}
This material is based upon work supported by the National Science
Foundation under Grant No. DMS-1928930, while the second author was in
residence at the Simons Laufer Mathematical Sciences Institute in
Berkeley, California, during the Spring 2025 semester. This material is also partially supported by NSF Award DMS-2154129. The second author would like to thank Matija Buci\'{c} for organizing the workshop \emph{Expander graphs and their applications} at Princeton University, and thank Igor Balla and Yuval Widgerson for insightful discussions.

\section{Inertia bound via expanders}
\label{sec:expanders}
In this section, we first prove Theorem~\ref{thm:ndlambda1}, and then deduce Theorems~\ref{thm:main} and~\ref{thm:main-shannon} as corollaries. As a first step, we employ a key insight which originated
from works on matrix scaling by Csima and Datta \cite{CsimaDatta1972, Datta1970} and was central 
in~\cite[Section~3]{Kwan2024}. This observation allows us to restrict our attention to weighted adjacency matrices whose rows all have the same \( L^2 \)-norm. 
\begin{defn}
A Hermitian matrix \( A \in \mathbb{C}^{n \times n} \) is said to be \emph{1-regular} if each of its rows has \( L^2 \)-norm equal to 1; that is,
\[
\sum_{j=1}^n |A_{ij}|^2 = 1 \quad \text{for all } i \in [n].
\] 
Here we write \( [n] = \{1, \dots, n\} \).
\end{defn}
The following lemma is essentially identical to the argument in~\cite{Kwan2024}, with minor adjustments to fit our setting. If \( M \) is an \( n \times n \) matrix and \( S, T \subseteq [n] \), we write \( M[S, T] \) to denote the submatrix of \( M \) consisting of the rows indexed by \( S \) and the columns indexed by \( T \). Recall that a matrix is said to be \emph{doubly stochastic} if it has nonnegative
entries and the sum of the entries in every row and every column is equal to~1.

\begin{lem}\label{lem:make-em-regular}
Let \( G = (V, E) \) be a graph on \( n \) vertices. Suppose that for any vertex sets \( S, T \subseteq V \) with no edges between them, we have \( |S| + |T| < n \). Then, for any weighted adjacency matrix \( A \) of \( G \), there exists a \( 1 \)-regular weighted adjacency matrix \( B \) of \( G \) such that
\[
n_{\geq 0}(B) \leq n_{\geq 0}(A).
\]
\end{lem}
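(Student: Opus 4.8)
The plan is to produce $B$ by a diagonal congruence of a small perturbation of $A$, so that Sylvester's law of inertia pins down $n_{\geq 0}$. If $D=\operatorname{diag}(e^{t_1/2},\dots,e^{t_n/2})$ is a positive diagonal matrix, then $B:=DAD$ is again a Hermitian weighted adjacency matrix of $G$ with exactly the same support as $A$, and $|B_{ij}|^2=|A_{ij}|^2e^{t_i+t_j}$; hence $B$ is $1$-regular precisely when $e^{t_i}\sum_j|A_{ij}|^2e^{t_j}=1$ for every $i$. Writing $M$ for the symmetric nonnegative matrix with entries $M_{ij}=|A_{ij}|^2$ and zero diagonal, this amounts to scaling $M$ symmetrically to a doubly stochastic matrix $\operatorname{diag}(e^t)M\operatorname{diag}(e^t)$, which is the content of the Csima--Datta theorem. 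Since congruence by $D$ preserves the inertia, such a $B$ automatically satisfies $n_{\geq 0}(B)=n_{\geq 0}(A)$.

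First I would reduce to the case $\operatorname{supp}(A)=E$. Replacing $A$ by $A+\varepsilon A_G$ for a sufficiently small $\varepsilon>0$ (avoiding the finitely many values of $\varepsilon$ at which some entry would vanish) keeps it a Hermitian weighted adjacency matrix of $G$ and makes its support equal to $E$, while it does not increase $n_{\geq 0}$: the strictly negative eigenvalues of $A$ persist under small symmetric perturbations by Weyl's inequality, so $n_{\geq 0}(A+\varepsilon A_G)\le n_{\geq 0}(A)$. The hypothesis also rules out isolated vertices (take $S=\{v\}$, $T=V\setminus\{v\}$), so after this step every row of $A$ is nonzero and $1$-regularity is not vacuously out of reach.

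Next I would carry out the scaling by minimizing the convex potential
\[
\Psi(t)=\tfrac12\sum_{i,j}M_{ij}e^{t_i+t_j}-\sum_i t_i,\qquad t\in\mathbb R^n,
\]
a nonnegative combination of exponentials of linear forms plus a linear term, whose stationarity equations $\partial_{t_k}\Psi=e^{t_k}(Mz)_k-1=0$ (with $z=(e^{t_i})_i$) are exactly the $1$-regularity system. It therefore suffices that $\Psi$ attain its infimum, for which it is enough that its recession function be positive in every nonzero direction. A direct computation gives $\Psi_\infty(d)=+\infty$ if $d_i+d_j>0$ for some $ij\in E$, and $\Psi_\infty(d)=-\sum_i d_i$ otherwise. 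So the only obstruction is a vector $d\neq 0$ with $d_i+d_j\le 0$ for every $ij\in E$ and $\sum_i d_i\ge 0$.

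Excluding such a $d$ is the crux, and the only place the combinatorial hypothesis is used. Since $d\neq 0$ and $\sum_i d_i\ge 0$, the value $a:=\max_i d_i$ is positive. For each $\lambda\in[0,a)$ the set $S_\lambda=\{i:d_i>\lambda\}$ is nonempty and independent (an internal edge would force $d_i+d_j>2\lambda\ge0$), and every $G$-neighbour $j$ of a vertex of $S_\lambda$ has $d_j\le-d_i<-\lambda$, so $N(S_\lambda)\subseteq\{j:d_j<-\lambda\}$. Hence $T_\lambda:=V\setminus N(S_\lambda)$ is nonempty (it contains $S_\lambda$) and has no edges to $S_\lambda$, and the hypothesis gives $|S_\lambda|+|T_\lambda|<n$, i.e.\ $|\{j:d_j<-\lambda\}|\ge|N(S_\lambda)|>|S_\lambda|=|\{i:d_i>\lambda\}|$. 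Integrating this strict inequality of integer-valued step functions over $\lambda\in[0,\infty)$ and applying the layer-cake formula yields $\sum_i\max(-d_i,0)\ge\sum_i\max(d_i,0)+a$, hence $\sum_i d_i\le-a<0$, a contradiction. Therefore $\Psi$ attains its infimum at some $t^\ast$, and $B:=\operatorname{diag}(e^{t^\ast/2})\,A\,\operatorname{diag}(e^{t^\ast/2})$ is a $1$-regular weighted adjacency matrix of $G$ with $n_{\geq 0}(B)=n_{\geq 0}(A)$, which by the first step is at most $n_{\geq 0}$ of the matrix we began with. The principal difficulty is exactly this last paragraph---translating the non-edge condition on $G$ into non-degeneracy of the scaling problem; the perturbation bookkeeping is a secondary point, and everything else is Sylvester's law together with elementary convex analysis.
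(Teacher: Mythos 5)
Your proof is correct and follows the same high-level structure as the paper's: perturb $A$ to a nearby weighted adjacency matrix with full support $E$ (this cannot increase $n_{\ge 0}$), symmetrically scale $|A_{ij}|^2$ to a doubly stochastic matrix, define $B$ by the corresponding diagonal congruence, and invoke Sylvester's law of inertia. The one genuine difference is that where the paper simply cites~\cite[Lemma~2.7]{Kwan2024}, which reduces to the Csima--Datta $DAD$ theorem for symmetric nonnegative matrices, you reprove the existence of the scaling from scratch via the convex potential $\Psi(t)=\tfrac12\sum_{i,j}M_{ij}e^{t_i+t_j}-\sum_i t_i$. Your recession-function analysis is right: the only obstruction to $\Psi$ attaining its minimum is a nonzero $d$ with $d_i+d_j\le 0$ for all $ij\in E$ and $\sum_i d_i\ge 0$, and your layer-cake argument using $S_\lambda=\{i:d_i>\lambda\}$ and $T_\lambda=V\setminus N(S_\lambda)$ correctly translates the hypothesis $|S|+|T|<n$ into $|N(S_\lambda)|>|S_\lambda|$ for $\lambda\in[0,a)$, forcing $\sum_i d_i\le -a<0$. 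This makes the lemma self-contained at the cost of a page of convex analysis, which is a reasonable trade-off if one wants to avoid the black box; the paper's route is shorter because it inherits the scaling existence from the prior work. One cosmetic remark: after the perturbation step you should rename $A+\varepsilon A_G$ (say to $A'$) before writing $B=\operatorname{diag}(e^{t^\ast/2})A\operatorname{diag}(e^{t^\ast/2})$, since the scaling is applied to the perturbed matrix, not the original; you clearly intend this, but the overloaded symbol $A$ could confuse a reader.
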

\begin{proof}Since the eigenvalues of a Hermitian matrix vary continuously under perturbation, the number of nonnegative eigenvalues, \( n_{\geq 0}(\cdot) \), is an upper semicontinuous function. Thus, there exists a neighbor $U$ of $A$ in the space of Hermitian matrices such that $n_{\geq 0}(A') \leq n_{\geq 0}(A)$ for every $A' \in U$. It is clear that we can find an $A' \in U$ such that $A'$ is a weighted adjacency matrix of $G$, and $A'_{ij} \neq 0$ for every edge $ij \in E$ of $G$. Let $M$ be the symmetric matrix with $M_{ij} = \abs{A'_{ij}}^2$.

If \( S, T \subseteq V \) are nonempty subsets with \( M[S, T] = 0 \), then there are no edges between \( S \) and \( T \) in \( G \), so \( |S| + |T| < n \) by assumption. By~\cite[Lemma~2.7]{Kwan2024}, there exists a diagonal matrix \( D \) with strictly positive diagonal entries such that \( D M D \) is doubly stochastic. Now we define \( B = D^{1/2} A' D^{1/2} \). Then for all \( i, j \), we have
\[
|B_{ij}|^2 = \left| \sqrt{D_{ii}}\, A'_{ij} \sqrt{D_{jj}} \right|^2 = (D M D)_{ij},
\]
so \( B \) is a \( 1 \)-regular weighted adjacency matrix of \( G \). Furthermore, by~\cite[Lemma~2.4]{Kwan2024}, we have
\[
n_{\geq 0}(B) = n_{\geq 0}(A') \leq n_{\geq 0}(A),
\]
as desired. \end{proof}

Recall that for any matrix \( X \in \mathbb{C}^{m \times n} \), we denote by \( X^* \) the \emph{conjugate transpose} of \( X \). The \emph{Frobenius inner product} between matrices \( X \) and \( Y \) is defined by \( \langle X, Y \rangle = \tr(X^* Y) \). For any vector \( \mathbf{x} = (x_1, \dots, x_n) \in \mathbb{C}^n \) and any \( p \geq 1 \), the \emph{\( p \)-norm} of \( \mathbf{x} \) is defined by
\[
\| \mathbf{x} \|_p = \left( \sum_{i=1}^n |x_i|^p \right)^{1/p}.
\]
We are now ready to present the following

\begin{proof}[Proof of Theorem~\ref{thm:ndlambda1}] Since \( \Gamma \) is a spanning subgraph of the complement of \( G \), we have \( G \subseteq \overline{\Gamma} \). As \( n_{\geq 0}(G) \geq n_{\geq 0}(\overline{\Gamma}) \), it suffices to prove the desired bound for \( G = \overline{\Gamma} \). Hence, we may assume without loss of generality that \( G \) is the complement of \( \Gamma \). Let \( \delta(G) \) denote the minimum vertex degree of \( G \). Then $\delta(G) = n - d - 1 > \frac{n}{2}$. For any non-empty vertex sets $S, T \subset V$ with no edges between them, we have $\abs{S}, \abs{T} \leq n - \delta(G) < \frac{n}{2}$, so $\abs{S} + \abs{T} < n$. 
    
    Let $A_0$ be any weighted adjacency matrix of $G$. By Lemma~\ref{lem:make-em-regular}, there exists a weighted adjacency matrix $A$ of $G$ such that $A$ is $1$-regular and $n_{\geq 0}(A) \leq n_{\geq 0}(A_0)$. So it suffices to prove the lower bound for $n_{\geq 0}(A)$. Recall that each row of $A$ has $L^2$-norm $1$, i.e.
    $$\sum_{j \in V} \abs{A_{ij}}^2 = 1,\quad \forall i \in V.$$
    Let $k = n_{\geq 0}(A)$. Let $\lambda_1(A) \geq \cdots \geq \lambda_n(A)$ be the eigenvalues of $A$. We first estimate the first eigenvalue $\lambda_1(A)$. Since $A$ is Hermitian with zero diagonal entries, we have
\[
\sum_{i = 1}^n \lambda_i(A) = \sum_{i \in V} A_{ii} = 0, \quad \text{and} \quad \sum_{i=1}^n \lambda_i(A)^2 = \sum_{i,j \in V} |A_{ij}|^2 = n.
\] Therefore, we obtain
    $$\sum_{i = 1}^k \lambda_i(A) = \frac{1}{2}\sum_{i = 1}^n \abs{\lambda_i(A)} \geq \frac{1}{2}\sqrt{\sum_{i = 1}^n \lambda_i(A)^2} = \frac{\sqrt{n}}{2},$$ which gives
    $$\lambda_1(A) \geq \frac{\sqrt{n}}{2k}.$$ Next, let \( \mathbf{v} \) be the normalized eigenvector of $A$ corresponding to \( \lambda_1(A) \).  
We give a bound on the \( \infty \)-norm of \( \mathbf{v} \).  
For each vertex \( i \in V \), let \( v_i \) denote the $i$th entry of \( \mathbf{v} \).  
Then
\[
v_i = \frac{1}{\lambda_1(A)} \sum_{j \in V} A_{ij} v_j.
\] By the Cauchy-Schwarz inequality, we have
    $$\abs{v_i} \leq \frac{1}{\lambda_1(A)} \left(\sum_{j \in V} \abs{A_{ij}}^{2}\right)^{1/2} \norm{\bv}_2 \leq \frac{1}{\lambda_1(A)} \leq \frac{2k}{\sqrt{n}},$$
    so we obtain $\norm{\bv}_\infty \leq \frac{2k}{\sqrt{n}}$.
    
    Now let $A_{\Gamma}$ be the unweighted adjacency matrix of $\Gamma$, and let $\Lambda$ be the diagonal matrix with diagonal entries $\Lambda_{ii} = v_i$ for each $i \in V$. We consider the Frobenius inner product
\[
\langle A, \Lambda (\lambda I - A_{\Gamma}) \Lambda^* \rangle.
\] On one hand, as $\Gamma$ is contained in the complement of $G$, we have $A_{ij} \neq 0$ implies $(\lambda I - A_{\Gamma})_{ij} = 0$. Therefore, we must have
    $$\langle A, \Lambda (\lambda I - A_{\Gamma}) \Lambda^* \rangle = 0.$$
    On the other hand, let $\bv^i$ denote the normalized eigenvector of $A$ corresponding to $\lambda_i(A)$, with $\bv^1 = \bv$. We consider the spectral decomposition
    $$A = \sum_{i = 1}^n \lambda_i(A) \bv^i {\bv^i}^*.$$
    Therefore, we have
    $$\sum_{i = 1}^n \lambda_i(A) \langle \bv^i {\bv^i}^*, \Lambda (\lambda I - A_{\Gamma}) \Lambda^* \rangle = 0.$$
    In other words, we have
    \begin{equation}\label{eq:ndlambdainertiabound1}
    \sum_{i = 2}^n \lambda_i(A) \langle \bv^i {\bv^i}^*, \Lambda (\lambda I - A_{\Gamma}) \Lambda^* \rangle = \lambda_1 \langle \bv \bv^*, \Lambda (A_\Gamma - \lambda I) \Lambda^* \rangle.\end{equation}
    For complex vectors $\bv = (v_1, \dots, v_n) \in \C^n$ and $\bw = (w_1, \dots, w_n) \in \C^n$, let $\bv \circ \bw$ denote the vector in $\C^n$ whose entries are
    $$(\bv \circ \bw)_i = v_i w_i^*.$$
    We compute that
    \begin{align*}
        \langle \bv^i {\bv^i}^*, \Lambda (\lambda I - A_{\Gamma}) \Lambda^* \rangle  
        =& \tr(\bv^i {\bv^i}^* \Lambda (\lambda I - A_{\Gamma}) \Lambda^*) \\
        =& {\bv^i}^* \Lambda (\lambda I - A_{\Gamma}) \Lambda^* \bv^i \\
        =&  (\bv^i \circ \bv)^* (\lambda I - A_{\Gamma}) (\bv^i \circ \bv).  
    \end{align*}
    For any \( i \geq 2 \), we have
\[
\langle \bv^i \circ \bv, \mathbf{1} \rangle = \langle \bv^i , \bv \rangle = 0.
\]
Therefore, \( \bv^i \circ \bv \) is orthogonal to \( \mathbf{1} \), the all-one vector. Since \( \Gamma \) is a \( d \)-regular graph, the largest eigenvalue of its adjacency matrix \( A_\Gamma \) is \( d \), and the corresponding eigenvector is parallel to \( \mathbf{1} \). Hence, \( \bv^i \circ \bv \) lies entirely in the orthogonal complement of the top eigenspace of \( A_\Gamma \). As \( \Gamma \) is an \( (n, d, \lambda) \)-graph, all other eigenvalues of \( A_\Gamma \) lie in the interval \( [-\lambda, \lambda] \). Hence, we have
\[
0 \leq (\mathbf{v}^i \circ \mathbf{v})^* (\lambda I - A_\Gamma) (\mathbf{v}^i \circ \mathbf{v}) \leq 2 \lambda \| \mathbf{v}^i \circ \mathbf{v} \|_2^2.
\] Furthermore, we have
    $$\norm{\bv^i \circ \bv}_2^2 \leq \norm{\bv^i}_2^2 \norm{\bv}_\infty^2 \leq \frac{4k^2}{n},$$ since $\norm{\bv}_\infty \leq \frac{2k}{\sqrt{n}}$. Therefore, we obtain
    \begin{equation}\label{eq:ndlambdainertiabound2}\sum_{i = 2}^n \lambda_i(A)\langle \bv^i {\bv^i}^*, \Lambda (\lambda I - A_{\Gamma}) \Lambda \rangle \leq \sum_{i = 2}^k 2 \lambda \lambda_i(A)\norm{\bv^i \circ \bv}_2^2 \leq \frac{8k^2 \lambda}{n} \sum_{i = 2}^k \lambda_i(A).\end{equation}
    On the other hand, we have
    $$\langle \bv \bv^*, \Lambda (A_\Gamma - \lambda I) \Lambda \rangle = (\bv \circ \bv)^* (A_{\Gamma} - \lambda I) (\bv \circ \bv).$$
    As $\langle \bv \circ \bv, \mathbf{1}\rangle = \langle \bv, \bv \rangle = 1$, we have $\bv \circ \bv = \frac{1}{n} \mathbf{1} + \bx$ for some vector $\bx$ orthogonal to $\mathbf{1}$. We compute that
    $$(\bv \circ \bv)^* (A_{\Gamma} - \lambda I) (\bv \circ \bv) = \frac{d - \lambda}{n} +  \bx^*(A_{\Gamma} - \lambda I)\bx.$$
    As $\bx^*(A_{\Gamma} - \lambda I)\bx \geq -2\lambda \norm{\bx}_2^2$, we get
    $$(\bv \circ \bv)^* (A_{\Gamma} - \lambda I) (\bv \circ \bv) \geq \frac{d - \lambda}{n} - 2\lambda \norm{\bx}_2^2 = \frac{d - \lambda}{n} - 2\lambda \left(\norm{\bv \circ \bv}_2^2 - \frac{1}{n}\right).$$
    Simplifying, we obtain
    $$
    (\bv \circ \bv)^* (A_{\Gamma} - \lambda I) (\bv \circ \bv) \geq \frac{d + \lambda}{n} - 2\lambda \norm{\bv \circ \bv}_2^2 \geq \frac{d + \lambda}{n} - \frac{8k^2 \lambda}{n}.$$
    Therefore, we conclude that
    \begin{equation}\label{eq:ndlambdainertiabound3}\lambda_1(A) \langle \bv \bv^*, \Lambda (A_\Gamma - \lambda I) \Lambda \rangle \geq \frac{d + \lambda}{n} \lambda_1(A) -  \frac{8k^2 \lambda}{n} \lambda_1(A).\end{equation}
    Combining~\eqref{eq:ndlambdainertiabound1}, \eqref{eq:ndlambdainertiabound2}, and~\eqref{eq:ndlambdainertiabound3}, we obtain
    $$\frac{d + \lambda}{n} \lambda_1(A) -  \frac{8k^2 \lambda}{n} \lambda_1(A) \leq \frac{8k^2\lambda}{n} \sum_{i = 2}^k \lambda_i(A).$$
    Thus we get
    $$\frac{d + \lambda}{8k^2 \lambda}\leq \frac{\sum_{i = 1}^k \lambda_i(A)}{\lambda_1(A)}.$$
    Finally, as $\lambda_1(A)$ is the greatest eigenvalue of $A$, we obtain
    $$\frac{\sum_{i = 1}^k \lambda_i(A)}{\lambda_1(A)} \leq k.$$
    So we conclude that 
    $$\frac{d + \lambda}{8k^2 \lambda} \leq k.$$
    Simplifying, we have
    $$k \geq \left(\frac{d + \lambda}{8\lambda}\right)^{1/3},$$
    as desired.
\end{proof}
Theorem~\ref{thm:main} now follows as a corollary.
\begin{proof}[Proof of Theorem~\ref{thm:main}]
    A classical construction of Alon in \cite[Theorem~2.1]{Alon1994} shows that for every positive integer $n = 2^{3k}$ with $k$ not divisible by $3$, there exist a triangle-free $(n, d, \lambda)$-graph $\Gamma_n$ with $d = 2^{k - 1}(2^{k - 1} - 1) = \Omega(n^{2/3})$ and $\lambda = O(d^{1/2})$.\footnote{We note that recently Kim and Lee \cite{KimLee2024} gave a similar construction for all orders $n$, though using their result would weaken our theorem by a logarithmic factor.} It is clear that $d < \frac{n}{2} - 1$. Let $\overline{\Gamma_n}$ be the complement of $\Gamma_n$. Applying Theorem~\ref{thm:ndlambda1} yields
    \[
    n_{\geq 0}(\overline{\Gamma_n}) = \Omega((d / \lambda)^{1/3}) = \Omega(n^{1/9}).
    \]

For \( n \geq 8 \), let \( n_0 \) be the largest positive integer of the form \( n_0 = 2^{3k} \), where \( k \) is not divisible by 3 and \( n_0 \leq n \). Let \( G_n \) be the graph obtained by adding \( (n - n_0) \) universal vertices to \( \overline{\Gamma_{n_0}} \). Then we have \( \alpha(G_n) \leq 2 \). Furthermore, by the Cauchy interlacing theorem (see~\cite[Lemma~2.5]{Kwan2024}), it follows that
\[
n_{\geq 0}(G_n) \geq n_{\geq 0}(\overline{\Gamma_{n_0}}) = \Omega(n_0^{1/9}) = \Omega(n^{1/9}),
\]
as desired.

For $n < 8$, we may simply take $G_n = K_n$, which satisfies $\alpha(G_n) = 1 \leq 2$ and trivially $n_{\geq 0}(G_n) = 1$.\end{proof}

The proof of Theorem~\ref{thm:main-shannon} is similar.
\begin{proof}[Proof of Theorem~\ref{thm:main-shannon}]
We consider the graphs $H_n$ constructed by Alon in~\cite[Theorem~3.2]{Alon2019}. Alon showed that $\Theta(H_n) \leq 3$, and moreover, the complement $\overline{H_n}$ is an induced subgraph of the polarity graph of the projective plane of order $q$.  From the proof of the second claim in~\cite[Theorem~3.2]{Alon2019}, the second-largest eigenvalue of the polarity graph is $\sqrt{q}$ and the smallest eigenvalue is $-\sqrt{q}$. By Cauchy interlacing, we obtain
\[
\max\{ |\lambda_2(\overline{H_n})|, |\lambda_n(\overline{H_n})| \} \leq \sqrt{q}.
\]
The proof of Alon also shows that $\overline{H_n}$ has $n = q^2 - 1$ vertices and is $q$-regular. Thus, $\overline{H_n}$ is an \((n, d, \lambda)\)-graph with parameters \(n = q^2 - 1\), \(d = q\), and \(\lambda = \sqrt{q}\). Applying Theorem~\ref{thm:ndlambda1} yields
\[
n_{\geq 0}(H_n) \geq \left(\frac{q + \sqrt{q}}{8 \sqrt{q}}\right)^{1/3} = \Omega(q^{1/6}) = \Omega(n^{1/12}),
\]
as desired.
\end{proof}

\section{Inertia and Lov\'{a}sz's theta function}
\label{sec:lovasztheta}


In this section, we construct graphs such that $\vartheta(G)$ is exponentially larger than $n_{\geq 0}(G)$. Our construction uses the following family of graphs with close relation to the forbidden intersection problem in extremal set theory.
\begin{defn}
Let \( n \) and \( k \) be positive integers with \( n > k \), and let \( L \subseteq \{0, 1, \dots, k - 1\} \). The \emph{generalized Johnson graph} $G = G(n, k, L)$ is the graph with $V(G) = \binom{[n]}{k}$, and $AB \in E(G)$ if and only if $\abs{A \cap B} \notin L$.
\end{defn}
Recently, Linz \cite{Linz2025} asymptotically determined the Lov\'{a}sz theta function of $G(n, k, L)$. We only need the special case where $L = \{\ell\}$ here. 
\begin{thm}[\cite{Linz2025}, Theorem 1.5]
Let $n$ and $k$ be positive integers with $n > k$ and $\ell$ be an integer in $[0, k - 1]$. Then we have
$$\vartheta(G(n, k, \{\ell\})) = \frac{\binom{k}{k - \ell}}{(k - \ell)(\ell + 1)} n + O_k(1).$$
\end{thm}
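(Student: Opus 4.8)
The plan is to compute $\vartheta\big(G(n,k,\{\ell\})\big)$ \emph{exactly} by symmetry reduction, and then extract the linear-in-$n$ asymptotics. Write $G=G(n,k,\{\ell\})$ and $V=\binom{[n]}{k}$. For $0\le s\le k$ let $M_s$ be the $V\times V$ matrix with $(M_s)_{A,B}=1$ when $|A\cap B|=s$ and $0$ otherwise; thus $M_k=I$ and $\{M_0,\dots,M_k\}$ is the standard basis of the Bose--Mesner algebra of the Johnson scheme $J(n,k)$, whose common eigenspaces $V_0,\dots,V_k$ have $V_0$ spanned by the all-ones vector. Since $S_n$ acts on $V$ as automorphisms of $G$, averaging an optimal solution of
$$\vartheta(G)=\max\{\langle J,X\rangle \ :\ X\succeq 0,\ \tr X=1,\ X_{A,B}=0\text{ for all }AB\in E(G)\}$$
over $S_n$ shows the optimum is attained by some $X=\sum_s x_s M_s$ in the Bose--Mesner algebra. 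As the non-edges of $G$ among distinct vertices are exactly the pairs with $|A\cap B|=\ell$, the edge constraints force $x_s=0$ for $s\notin\{\ell,k\}$, the trace constraint forces $x_k=1/\binom nk$, and writing $x:=x_\ell$ we get $X=\tfrac1{\binom nk}I+x\,M_\ell$ with objective $\langle J,X\rangle=1+x\binom nk v_\ell$, where $v_\ell=\binom k\ell\binom{n-k}{k-\ell}$ is the valency of the relation $|A\cap B|=\ell$. The constraint $X\succeq0$ reads $\tfrac1{\binom nk}+x\,p_\ell(j)\ge0$ for every eigenvalue $p_\ell(j)$ of $M_\ell$, and since the objective is increasing in $x$ this pins down the optimum, giving the exact identity
$$\vartheta(G)=1+\frac{v_\ell}{-\tau},\qquad \tau:=\min_{0\le j\le k}p_\ell(j),$$
valid whenever $M_\ell\neq0$ (i.e.\ $n\ge 2k-\ell$), and $\vartheta(G)=1$ when $M_\ell=0$; note $\tau<0$ automatically, since $\tr M_\ell=0$. (Equivalently, via $\vartheta(G)\vartheta(\overline G)=\binom nk$ for vertex-transitive graphs, this is the ratio bound for the complementary relation graph.)

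The heart of the argument is to determine $\tau$ as a function of $n$. Letting $W_t$ denote the inclusion matrix of $t$-subsets into $k$-subsets, one has $W_t^{\top}W_t=\sum_{s\ge t}\binom st M_s$, so Möbius inversion gives $M_\ell=\sum_{t=\ell}^{k}(-1)^{t-\ell}\binom t\ell W_t^{\top}W_t$; combined with the classical fact that $W_t^{\top}W_t$ acts on $V_j$ as the scalar $\binom{k-j}{t-j}\binom{n-t-j}{k-t}$ (and as $0$ for $j>t$), this yields
$$p_\ell(j)=\sum_{t=\max(\ell,j)}^{k}(-1)^{t-\ell}\binom t\ell\binom{k-j}{t-j}\binom{n-t-j}{k-t},$$
a polynomial in $n$ whose dominant term comes from the smallest summation index $t$. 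Reading off that term shows: for $j\le\ell$ the leading behaviour is $\binom{k-j}{\ell-j}\tfrac{n^{k-\ell}}{(k-\ell)!}>0$; for $j=\ell+1$ it is $-(\ell+1)\tfrac{n^{k-\ell-1}}{(k-\ell-1)!}<0$; and for $j=\ell+m$ with $2\le m\le k-\ell$ the polynomial $p_\ell(j)$ has degree $k-\ell-m<k-\ell-1$. Hence there is a threshold $n_0(k)$ such that for $n>n_0(k)$ the minimum is attained at $j=\ell+1$, so $-\tau=-p_\ell(\ell+1)$ is a polynomial in $n$ of degree $k-\ell-1$ with leading coefficient $\tfrac{\ell+1}{(k-\ell-1)!}$ (this also holds, with $-\tau$ the constant $k$, in the boundary case $k-\ell=1$).

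Finally, for $n>n_0(k)$, $\vartheta(G)=1+v_\ell/(-\tau)$ is one plus a quotient of explicit polynomials in $n$ of degrees $k-\ell$ and $k-\ell-1$, with leading coefficients $\tfrac{\binom k\ell}{(k-\ell)!}$ and $\tfrac{\ell+1}{(k-\ell-1)!}$; expanding around $n=\infty$ (the degree gap is exactly $1$) gives
$$\vartheta(G)=\frac{\binom k\ell/(k-\ell)!}{(\ell+1)/(k-\ell-1)!}\,n+O_k(1)=\frac{\binom k{k-\ell}}{(k-\ell)(\ell+1)}\,n+O_k(1),$$
using $(k-\ell)!=(k-\ell)(k-\ell-1)!$ and $\binom k\ell=\binom k{k-\ell}$. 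For the finitely many $n\le n_0(k)$ the estimate is trivial, since there $\vartheta(G)\le\binom nk=O_k(1)$ and also $\tfrac{\binom k{k-\ell}}{(k-\ell)(\ell+1)}n=O_k(1)$. The main obstacle is the middle step: writing $M_\ell$ as the alternating sum of the $W_t^{\top}W_t$, invoking (or reproving) the spectrum of the inclusion matrices, and—most delicately—doing the degree bookkeeping carefully enough to certify that $V_{\ell+1}$ is the minimizing eigenspace for \emph{all} large $n$, and to read off the leading coefficient of $-\tau$. The symmetrization and the concluding polynomial-division asymptotics are routine.
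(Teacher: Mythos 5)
Your proof is correct and follows essentially the same route as Linz's argument, which the paper simply cites: symmetrize the $\vartheta$-SDP over $S_n$ to land in the Bose--Mesner algebra of the Johnson scheme, reduce to a one-parameter LP governed by the eigenvalues $p_\ell(j)$ of $M_\ell$, identify $j=\ell+1$ as the minimizing eigenspace for large $n$, and read off the linear coefficient from the ratio of leading terms of $v_\ell$ and $-p_\ell(\ell+1)$. The paper's later Proposition~\ref{thm:johnson-theta} reuses exactly Linz's explicit constraint (the sum over $j$ there is $p_\ell(u)/v_\ell$), so your eigenvalue-based characterization $\vartheta(G)=1+v_\ell/(-\tau)$ is the same LP in different notation.
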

We now carefully go over Linz's proof of this theorem to make the dependency on $k$ explicit. 
\begin{prop}
\label{thm:johnson-theta}
Let $n$ and $k$ be positive integers with $k \geq 2$ and $n \geq k^3 + 1$. Let $\ell$ be an integer in $[0, k - 1]$. Then we have
$$\vartheta(G(n, k, \{\ell\})) \geq \frac{\binom{k}{k - \ell}}{(k - \ell)(\ell + 1)} (n - k).$$
\end{prop}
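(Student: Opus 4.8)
The strategy, following Linz~\cite{Linz2025}, is to lower-bound $\vartheta$ by exhibiting a single feasible matrix in Lov\'{a}sz's semidefinite formulation
\[
\vartheta(H) = \max\bigl\{\langle J, B\rangle : B \succeq 0,\ \tr(B) = 1,\ B_{uv} = 0 \text{ for every } uv \in E(H)\bigr\},
\]
where $J$ is the all-ones matrix. For $H = G(n,k,\{\ell\})$ the off-diagonal non-edges are exactly the pairs $A \ne B \in \binom{[n]}{k}$ with $\abs{A\cap B} = \ell$, so if $C_\ell$ denotes the $0/1$ matrix of the relation ``$\abs{A\cap B} = \ell$'' on $\binom{[n]}{k}$, then every matrix of the form $B = \alpha I + \beta C_\ell$ vanishes on all edges of $H$. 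Choosing $\alpha = 1/\binom nk$ forces $\tr(B) = 1$, and since $\langle J, C_\ell\rangle = \binom nk\binom k\ell\binom{n-k}{k-\ell}$ we get $\langle J, B\rangle = 1 + \beta\binom nk\binom k\ell\binom{n-k}{k-\ell}$. The entire problem thus reduces to making $\beta$ as large as possible while keeping $B \succeq 0$, i.e.\ to an upper bound on $\abs{\lambda_{\min}(C_\ell)}$.

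The matrix $C_\ell$ lies in the Bose--Mesner algebra of the Johnson scheme $J(n,k)$, so it is diagonalized by the eigenspaces $V_0, V_1, \dots, V_k$ of that scheme, and its eigenvalue on $V_i$ is an explicit alternating sum (an evaluation of an Eberlein polynomial). One convenient route to these eigenvalues is the inclusion--exclusion identity $C_\ell = \sum_{t=\ell}^{k}(-1)^{t-\ell}\binom t\ell\, W_{t,k}^{\top}W_{t,k}$, where $W_{t,k}$ is the $t$-versus-$k$ inclusion matrix, together with the standard fact that $W_{t,k}^{\top}W_{t,k}$ acts on $V_i$ as the scalar $\binom{k-i}{t-i}\binom{n-t-i}{k-t}$ (and as $0$ when $i > t$). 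The key estimate I would establish is
\[
\lambda_{\min}(C_\ell)\ \ge\ -(\ell+1)\binom{n-k-\ell-1}{k-\ell-1}\qquad\text{whenever } n \ge k^3 + 1,
\]
with the minimum attained on $V_{\ell+1}$. The proof splits according to $i$: for $i \le \ell$ the $V_i$-eigenvalue is an alternating sum whose first term is positive of order $n^{k-\ell}$ and dominates the remaining terms once $n \ge k^3+1$, so that eigenvalue is positive; for $i = \ell+1$ the sum begins with exactly $-(\ell+1)\binom{n-k-\ell-1}{k-\ell-1}$ followed by a positive, magnitude-decreasing alternating tail, so the eigenvalue lies in $\bigl[-(\ell+1)\binom{n-k-\ell-1}{k-\ell-1},\,0\bigr)$; and for $i \ge \ell+2$ the first term has magnitude at most $\binom{i}{i-\ell}\binom{n-k-i}{k-i}$, which is of order $n^{k-i}\le n^{k-\ell-2}$, so the $V_i$-eigenvalue has magnitude far below $(\ell+1)\binom{n-k-\ell-1}{k-\ell-1}$.

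I expect the range $i \ge \ell+2$ to be the main obstacle, since there one must verify, uniformly in $k$, $\ell$ and $i$, an inequality of the shape $2\binom{i}{i-\ell}\binom{n-k-i}{k-i} \le (\ell+1)\binom{n-k-\ell-1}{k-\ell-1}$. The delicate point is that $\binom{i}{i-\ell}$ can be as large as $\binom{k}{\lfloor k/2\rfloor}$, but it multiplies a binomial coefficient that is smaller than $\binom{n-k-\ell-1}{k-\ell-1}$ by a factor of order $(n/k)^{i-\ell-1}$; moreover a factor $\ell+1$ cancels from both sides, so writing $r = i - \ell$ the surviving requirement is essentially $\binom{\ell+r}{r}/(\ell+1) \lesssim (n/k)^{r-1}$, whose tightest instance (at $r = 2$) only needs $n$ to exceed a fixed multiple of $k^2$. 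Consequently $n \ge k^3+1$ is comfortably sufficient, the cube merely providing slack that keeps the binomial-coefficient estimates clean; a couple of degenerate cases (e.g.\ $k = 2$, or $\ell = 0$, where the bound is in fact an equality) can be handled directly.

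Granting the estimate, put $\beta = \alpha\big/\bigl((\ell+1)\binom{n-k-\ell-1}{k-\ell-1}\bigr)$, so that $B = \alpha I + \beta C_\ell \succeq 0$ is feasible and hence
\[
\vartheta\bigl(G(n,k,\{\ell\})\bigr)\ \ge\ 1 + \frac{\binom k\ell\binom{n-k}{k-\ell}}{(\ell+1)\binom{n-k-\ell-1}{k-\ell-1}}.
\]
Finally $\binom k\ell = \binom{k}{k-\ell}$, and the identity $(k-\ell)\binom{n-k}{k-\ell} = (n-k)\binom{n-k-1}{k-\ell-1}$ together with $\binom{n-k-1}{k-\ell-1} \ge \binom{n-k-\ell-1}{k-\ell-1}$ (valid since $n \ge 2k$) gives $\binom{n-k}{k-\ell}\big/\binom{n-k-\ell-1}{k-\ell-1} \ge (n-k)/(k-\ell)$. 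Hence
\[
\vartheta\bigl(G(n,k,\{\ell\})\bigr)\ \ge\ 1 + \frac{\binom{k}{k-\ell}}{(k-\ell)(\ell+1)}(n-k)\ \ge\ \frac{\binom{k}{k-\ell}}{(k-\ell)(\ell+1)}(n-k),
\]
as desired.
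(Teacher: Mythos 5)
Your proposal is correct and, once unwound, is essentially the same argument as the paper's, just presented from the primal side of Delsarte's linear programming bound rather than the dual. The paper cites Linz's explicit LP characterization of $\vartheta(G(n,k,\{\ell\}))$ and verifies that $a_{k-\ell} = \frac{\binom{k}{k-\ell}}{(k-\ell)(\ell+1)}(n-k)$ satisfies the constraint $a_{k-\ell}E_{k-\ell}(u)/E_{k-\ell}(0) \geq -1$ for every $u \in \{0,\dots,k\}$, where $E_{k-\ell}(u)$ is exactly the Eberlein polynomial value that you identify as the $V_u$-eigenvalue of $C_\ell$. You instead build the primal feasible matrix $B = \alpha I + \beta C_\ell$ directly, which is mathematically the same thing because the scheme's Bose--Mesner structure makes the SDP collapse to the very LP Linz writes down; the positive-semidefiniteness check you reduce to, namely $\lambda_{\min}(C_\ell) \geq -(\ell+1)\binom{n-k-\ell-1}{k-\ell-1}$, coincides (after multiplying Linz's constraint through and using $(k-\ell)\binom{n-k}{k-\ell}=(n-k)\binom{n-k-1}{k-\ell-1}$) with what the paper establishes via its Claim on the monotone decay of the terms $f_{u,j}$. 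Your case split on $i$ mirrors the paper's split on $u$: nonnegativity for $i \leq \ell$, the extremal value at $i = \ell+1$, and rapidly shrinking magnitude for $i \geq \ell+2$; the paper in fact proves the term-decay claim you invoke by a direct ratio computation, and shows $f_{u,u-\ell}$ is nonincreasing in $u \geq \ell+1$, which gives a cleaner uniform treatment of your ``main obstacle'' range $i \geq \ell+2$ (no factor of $2$ cushion needed). The payoff of your framing is self-containedness (no need to import Linz's characterization), at the cost of needing the Johnson scheme eigenvalue formulas; the estimates, and the source of the $n \geq k^3+1$ hypothesis, are identical.
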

\begin{proof}
    In the proof of Theorem~1.5 in \cite[Page~7]{Linz2025}, Linz gave the following explicit characterization of $\vartheta(G(n, k, \{\ell\}))$: 
    $$\vartheta(G(n, k, \{\ell\})) = \max(1 + a_{k - \ell}),$$
    subject to the constraint that for each $u \in \{0, 1, \dots, k\}$
    \begin{equation}
        \label{eq:theta-constraint}
     a_{k - \ell} \frac{1}{\binom{k}{k - \ell} \binom{n - k}{k - \ell}} \sum_{j = 0}^{k - \ell} (-1)^j \binom{u}{j} \binom{k - u}{k - \ell - j} \binom{n - k - u}{k - \ell - j} \geq -1.   
    \end{equation}
Let \( f_{u, j} \) denote the product terms appearing in this constraint
    $$f_{u, j} = \binom{u}{j} \binom{k - u}{k - \ell - j} \binom{n - k - u}{k - \ell - j}.$$
    Clearly, we have $f_{u, j} \neq 0$ only if $j \in [u - \ell, u]$. We now show that $f_{u, j}$ decays in this interval. 

\begin{claim}\label{claim1} For any $j \geq \max(0, u - \ell)$ and $j \leq k - \ell$, we have $f_{u, j} \geq f_{u, j + 1}$.
\end{claim}
\begin{proof}[Proof of Claim~\ref{claim1}]
We have $f_{u, j + 1} = 0$ for $j > u$, so we may assume that $j \leq u$. We can explicit compute the ratio as
    $$\frac{f_{u, j + 1}}{f_{u, j}} = \frac{u - j}{j + 1} \cdot \frac{k - \ell - j}{j + \ell - u + 1} \cdot \frac{k - \ell - j}{n - u - \ell + j + 1}.$$
    As $j \in [u - \ell, k - \ell]$ and $n \geq k^3 + 1$, we have
    $$(j + \ell - u + 1)(n - u - \ell + j + 1) \geq n - 1.$$
    Therefore, we obtain
    $$\frac{f_{u, j + 1}}{f_{u, j}} \leq \frac{u (k - \ell)^2}{n - 1} \leq \frac{k^3}{n - 1} \leq 1,$$
    which proves Claim~\ref{claim1}.    
\end{proof}

    We now show that
    $$a_{k - \ell} = \frac{\binom{k}{k - \ell}}{(k - \ell)(\ell + 1)} (n -  k)$$
    satisfies \eqref{eq:theta-constraint}, which establishes this proposition. We divide into two regimes.

    \textbf{Case 1: }$u \leq \ell$. In this case, we have $f_{u, j} \geq f_{u, j + 1}$ for every $j \in [0, k - \ell]$. Therefore, we have
    $$\sum_{j = 0}^{k - \ell} (-1)^j \binom{u}{j} \binom{k - u}{k - \ell - j} \binom{n - k - u}{k - \ell - j} = \sum_{j = 0}^{k - \ell} (-1)^j f_{u, j} \geq 0,$$
    so the left hand side of \eqref{eq:theta-constraint} is non-negative.

    \textbf{Case 2: }$u > \ell$. In this case, the first non-zero term in the sum is $j = u - \ell$, and $f_{u, j} \geq f_{u, j + 1}$ for every $j \in [u - \ell, k - \ell]$. Therefore, we have
    $$\sum_{j = 0}^{k - \ell} (-1)^j \binom{u}{j} \binom{k - u}{k - \ell - j} \binom{n - k - u}{k - \ell - j} \geq -f_{u, u - \ell} = -\binom{u}{u - \ell} \binom{n - k - u}{k - u}.$$
    Thus, it suffices to check that
    \begin{equation}
        \label{eq:reduced-constraint}
     a_{k - \ell} \frac{1}{\binom{k}{k - \ell} \binom{n - k}{k - \ell}} \binom{u}{u - \ell} \binom{n - k - u}{k - u} \leq 1.   
    \end{equation}
    We observe that for any $u \in [\ell, k - 1]$
    $$\frac{\binom{u + 1}{u + 1 - \ell} \binom{n - k - u - 1}{k - u - 1} }{\binom{u}{u - \ell} \binom{n - k - u}{k - u}} = \frac{u + 1}{u + 1 - \ell} \cdot \frac{k - u}{n - k - u} \leq \frac{k^2}{n - 2k} \leq 1.$$
    Therefore, the left hand side of \eqref{eq:reduced-constraint} is non-increasing in $u$, and it suffices to check \eqref{eq:reduced-constraint} for $u = \ell + 1$. This simplifies to
    $$ a_{k - \ell} \frac{1}{\binom{k}{k - \ell} \binom{n - k}{k - \ell}} (\ell + 1) \binom{n - k - \ell - 1}{k - \ell - 1} = \frac{\binom{n - k - \ell - 1}{k - \ell - 1}}{\binom{n - k -  1}{k - \ell - 1}} \leq 1,$$
    which follows directly from the definition of $a_{k - \ell}$.
\end{proof}
On the other hand, the inertia of $G(n, k, \{\ell\})$ is small.
\begin{prop}
    \label{prop:johnson-inertia}
    For any positive integers $n, k, \ell$ with $\ell \in [0, k - 1]$, we have
    $$n_{\geq 0}(G(n, k, \{\ell\})) \leq n.$$
\end{prop}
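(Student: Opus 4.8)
The plan is to construct a single weighted adjacency matrix $A$ of $G = G(n, k, \{\ell\})$ with at most $n$ nonnegative eigenvalues. The natural candidate comes from the classical association-scheme / Johnson-scheme machinery: the adjacency matrix of $G$ is a polynomial in the adjacency matrix of the Johnson scheme, and more importantly, one can take a well-chosen \emph{inclusion matrix} and build $A$ from it. Concretely, let $W$ be the $\binom{[n]}{k-\ell} \times \binom{[n]}{k}$ inclusion matrix whose $(T, B)$ entry is $1$ if $T \subseteq B$ and $0$ otherwise (or, depending on which intersection value we want to isolate, the $\binom{[n]}{\ell} \times \binom{[n]}{k}$ matrix recording $\ell$-subsets). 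Then $W^* W$ is a matrix indexed by $k$-sets whose $(A,B)$ entry depends only on $|A \cap B|$, and by choosing the right inclusion level (and possibly subtracting a scalar multiple of the identity) we can arrange that the off-diagonal entries of $W^* W$ are supported exactly on the pairs with $|A \cap B| \neq \ell$, i.e.\ exactly on $E(G)$.

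The key steps, in order, are: (i) identify the correct inclusion matrix $W$ of size $m \times \binom{[n]}{k}$ with $m \le n$ — here $m = \binom{[n]}{1} = n$ when we use single-element subsets, which is what forces the bound $n$; (ii) verify that $W^* W - c I$ is a weighted adjacency matrix of $G$ for a suitable constant $c$, i.e.\ that its zero pattern matches the non-edges of $G$; (iii) observe that $W^* W$ is positive semidefinite of rank at most $m \le n$, so it has at most $n$ nonzero (hence at most $n$ nonnegative, after subtracting $cI$ which only shifts the spectrum) eigenvalues — more precisely, $W^*W$ has at most $n$ strictly positive eigenvalues and the rest are zero, and subtracting $cI$ for $c > 0$ turns all the zero eigenvalues negative, leaving at most $n$ nonnegative ones; (iv) conclude $n_{\geq 0}(G) \le n_{\geq 0}(W^*W - cI) \le n$. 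For the case $\ell \ne 1$ one may need to take $W$ to be the $\binom{[n]}{\ell} \times \binom{[n]}{k}$ inclusion matrix and check the entry formula, but one should then argue that a cleverer choice still keeps the number of rows at most $n$, or instead use the $1$-element inclusion matrix together with the fact that $\sum_{x} \mathbbm{1}[x \in A]\mathbbm{1}[x\in B] = |A \cap B|$, which is an injective (up to affine shift) function of $|A\cap B|$ only when we further compose — so actually the cleanest route is: let $N$ be the $n \times \binom{[n]}{k}$ vertex--set incidence matrix, so $(N^*N)_{AB} = |A\cap B|$; then any polynomial $p(N^*N)$ of degree $1$ is a matrix whose $(A,B)$ entry is an affine function of $|A\cap B|$, which does \emph{not} immediately isolate $\{\ell\}$ — hence we really do want the degree to be large enough, and the honest statement is that $p(N^*N)$ for the right degree-$(k-1)$ polynomial $p$ has the correct support, but $N^*N$ only has $n$ nonzero eigenvalues, so $p(N^*N)$ has at most $n$ eigenvalues that are nonzero, and we must control the \emph{sign} of $p$ evaluated at $0$ versus at the nonzero eigenvalues of $N^*N$.

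The main obstacle I expect is step (iii)--(ii) done simultaneously: making the zero pattern exactly right while simultaneously controlling the sign of the relevant eigenvalue. Using $A = N^* N - cI$ gives the right rank bound trivially (rank $\le n$, so at most $n$ nonzero eigenvalues, and shifting down by $c$ keeps at most $n$ nonnegative), but $(N^*N)_{AB} = |A\cap B|$ is nonzero for \emph{every} pair with $|A\cap B| \ge 1$, not just those with $|A\cap B| \notin L$; so $N^*N - cI$ is a weighted adjacency matrix of $G$ only when $\ell = 0$. To handle general $\ell$, I would instead take $W$ to be the $\binom{[n]}{j}$-by-$\binom{[n]}{k}$ inclusion matrix for the smallest $j$ such that one can write the indicator of $\{\,|A\cap B| \ne \ell\,\}$ appropriately; but since $\binom{[n]}{j}$ can exceed $n$, the safe move is to reduce to the $\ell = 0$ case or to invoke the fact (standard in the Delsarte theory of the Johnson scheme) that $G(n,k,\{\ell\})$ has a weighted adjacency matrix whose Gram representation uses vectors in $\mathbb{R}^n$ — equivalently that the relevant element of the Bose--Mesner algebra, suitably shifted, is PSD of rank $\le n$ because it lies in the span of the first two idempotents $E_0, E_1$ of the Johnson scheme, both of rank $\le n$. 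So the real content is: exhibit a weighted adjacency matrix of $G$ lying in $\mathrm{span}(E_0, E_1)$ (rank $\le n$, actually $\le n$) after subtracting a positive multiple of $I$ — and I expect verifying that such a matrix has the exact zero pattern of $G$ (i.e.\ vanishes on pairs with $|A \cap B| = \ell$ and only there) is the step requiring the genuine combinatorial computation with Eberlein polynomials, which I would keep to a minimum by citing the degree-one structure of the scheme.
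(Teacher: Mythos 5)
Your high-level plan matches the paper's: use the $n \times \binom{n}{k}$ vertex--set incidence matrix $N$ (the paper calls $U^\top$ what you call $N$), exploit that $N^\top N = UU^\top$ has rank at most $n$, and shift so that the extra $\binom{n}{k} - n$ zero eigenvalues become negative. But you stop short of the one observation that makes the whole argument a single line, and you misdiagnose what is needed to fill the gap.

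You correctly note that $UU^\top - cI$ has the wrong zero pattern for $\ell \geq 1$, since $(UU^\top)_{AB} = |A \cap B|$ vanishes only at $|A \cap B| = 0$. From there you conclude that you need a polynomial in $UU^\top$ of degree up to $k-1$, or a genuine Eberlein-polynomial computation in the Bose--Mesner algebra. Neither is necessary. The object you are missing is the all-ones matrix $J$: subtracting a multiple of $J$ shifts every \emph{off-diagonal} entry by a constant, which is exactly the freedom a polynomial $p(UU^\top)$ cannot give you (there, only the diagonal can be adjusted independently). Concretely, the paper takes
\[
M = UU^\top - (k - \ell)\,I - \ell\, J,
\]
so that $M_{AB} = |A \cap B| - \ell$ for $A \neq B$ (vanishing precisely when $|A \cap B| = \ell$, i.e.\ on non-edges) and $M_{AA} = k - (k-\ell) - \ell = 0$. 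Thus $M$ is a weighted adjacency matrix of $G(n,k,\{\ell\})$. Since $UU^\top$ has rank at most $n$ and $-(k-\ell)I - \ell J$ is negative definite (because $k - \ell \geq 1$ and $J$ is positive semidefinite), $M$ has at most $n$ nonnegative eigenvalues.

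The gap, then, is twofold: (1) you never consider adding a multiple of $J$, which is what an affine --- not polynomial --- dependence on $|A\cap B|$ requires; and (2) you explicitly defer the zero-pattern verification to a claimed ``combinatorial computation with Eberlein polynomials,'' whereas no such computation exists or is needed --- the affine identity is immediate by inspection of entries. Your instinct about $\operatorname{span}(E_0, E_1)$ is sound (indeed $\operatorname{span}(UU^\top, I, J) = \operatorname{span}(E_0, E_1, I)$, and $J$ is a scalar multiple of $E_0$), but without the explicit matrix the argument as written is not a proof.
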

\begin{proof}
    We consider the weighted adjacency matrix \( M \) of \( G(n, k, \{\ell\}) \), defined by
\[
M_{A, B} = 
\begin{cases}
|A \cap B| - \ell, & \text{if } A \ne B, \\
0, & \text{if } A = B.
\end{cases}
\] Let \( I \) denote the identity matrix, \( J \) the all-one matrix, and \( U \) the \( \binom{n}{k} \times n \) matrix whose row corresponding to each \( A \in \binom{[n]}{k} \) is the indicator vector of \( A \). Then the \((A, B)\)-entry of \( UU^\top \) is given by \((UU^\top)_{A,B} = |A \cap B|\). Therefore, the matrix \( M \) can be written as
\[
M = UU^\top - (k - \ell) I - \ell J.
\] Since $-(k - \ell) I - \ell J$ is strictly negative-definite, and $UU^\top$ has at most $n$ non-zero eigenvalues, we conclude that $M$ has at most $n$ non-negative eigenvalues.
\end{proof}
Combining the above two results gives us Theorem~\ref{thm:theta1}.
\begin{proof}[Proof of Theorem~\ref{thm:theta1}]
    For each positive integer $k \geq 2$, set $n = k^3 + 1$ and $\ell = \lfloor \frac{k}{2} \rfloor$. Take $G$ to be the generalized Johnson graph $G = G(n, k, \ell)$. Then Proposition~\ref{prop:johnson-inertia} shows that
    $$n_{\geq 0}(G) \leq n = k^3 + 1.$$ It is well known that \(\binom{k}{\lfloor \frac{k}{2} \rfloor} \geq \frac{2^k}{\sqrt{2k}}\). Thus, by Proposition~\ref{thm:johnson-theta}, we obtain
\[
\vartheta(G) 
\geq \frac{\binom{k}{k - \ell}}{(k - \ell)(\ell + 1)} (n - k) 
\geq \frac{\binom{k}{\lfloor \frac{k}{2} \rfloor}}{\left(\frac{k + 1}{2}\right)^2} (k^3 - k + 1) 
\geq \frac{4(k^3 - k + 1)}{\sqrt{2k} \cdot (k + 1)^2}   2^k 
\geq  2^k,
\]as desired.
\end{proof}





\section{Concluding Remarks}
\label{sec:conclusion}
We believe that our results in this paper are just the tip of an iceberg, and many other interesting things can be said about the inertia bound. In this section, we discuss several open problems that may serve as motivation for future work.

\subsection{Inertia, Shannon capacity, and Lov\'{a}sz theta function}

Theorem~\ref{thm:main-shannon} shows that \( n_{\geq 0}(G) \) cannot be upper bounded by any function of \( \Theta(G) \); however, it remains unclear whether the converse holds. In addition, Theorem~\ref{thm:theta1} implies that there exist graphs \( G \) with arbitrarily large \( n_{\geq 0}(G) \) for which \( \vartheta(G) \geq 2^{n_{\geq 0}(G)^{1/3} - 1} \). Nevertheless, this does not rule out the possibility that \( \vartheta(G) \) is upper bounded by some function of \( n_{\geq 0}(G) \), or conversely, that \( n_{\geq 0}(G) \) is upper bounded by some function of \( \vartheta(G) \). We leave these relationships as an open question.

\begin{ques}
    1) Is $\Theta(G)$ upper-bounded by some function of $n_{\geq 0}(G)$? 

    2) Is $\vartheta(G)$ upper-bounded by some function of $n_{\geq 0}(G)$? 
    
    3) Conversely, is $n_{\geq 0}(G)$ upper-bounded by some function of $\vartheta(G)$? 
    
    4) More strongly, is $n_{\geq 0}(G)$ upper-bounded by $\vartheta(G)^C$ for some constant $C > 0$?
\end{ques}


\subsection{Nordhaus--Gaddum type bounds for inertia}
\label{sec:NordhausGaddum}

The original Nordhaus--Gaddum inequalities state that for any graph \( G \) on \( n \) vertices,
\begin{equation}\label{eq:classicNordhausGaddum1}
\chi(G) \cdot \chi(\overline{G}) \geq n
\quad \text{and}  \quad \chi(G) + \chi(\overline{G}) \leq n + 1.
\end{equation}
These classic inequalities have inspired a wide range of Nordhaus--Gaddum type inequalities for other graph parameters. For example, the survey paper by Aouchiche and Hansen~\cite{AouchicheHansen2017} reviews scores of Nordhaus--Gaddum type inequalities. Notably, the Lov\'{a}sz theta function also satisfies Nordhaus--Gaddum type bounds:
\[
\vartheta(G) \cdot \vartheta(\overline{G}) \geq n
\quad \text{and} \quad
\vartheta(G) + \vartheta(\overline{G}) \leq n + 1.
\]
The inequality \( \vartheta(G) \cdot \vartheta(\overline{G}) \geq n \) is given in~\cite[Proposition~11.4]{Lovasz19}. The second inequality is immediate from the well-known relation \( \vartheta(G) \leq \chi(\overline{G}) \) and \eqref{eq:classicNordhausGaddum1}.

It is natural to ask whether the inertia of a graph also satisfies a similar Nordhaus--Gaddum type bound. The multiplicative bound does not hold as stated for \( n_{\geq 0}(G) \). For example, let \( G = C_5 \). The negated \(\{0, 1\}\)-adjacency matrix of \( G \) has only two non-negative eigenvalues, so we have \(n_{\geq 0}(G) = n_{\geq 0}(\overline{G}) = 2\), and hence \(n_{\geq 0}(G) \cdot n_{\geq 0}(\overline{G}) = 4 < 5 = n.\) We do not know if a weaker version of the Nordhaus--Gaddum bound can hold.
\begin{ques}
 \label{problem:NordhausGaddumforinertia0}
What is the optimal lower bound for $ n_{\geq 0}(G) \cdot n_{\geq 0}(\overline{G}) $ in terms of the number of vertices $n$? Is it polynomial in $n$? 
\end{ques}
We remark that the inertia bound implies
\[
n_{\geq 0}(G) \cdot n_{\geq 0}(\overline{G}) \geq \alpha(G) \cdot \alpha(\overline{G}) \geq \mu_n,
\]
where $\mu_n = \min\{ ab : R(a + 1, b + 1) > n \} = O(\log^2 n)$ is the classical Nordhaus--Gaddum type lower bound for the independence number, see \cite[Theorem~2.100]{AouchicheHansen2017} or \cite{ChartrandSchuster1974}. Moreover, for certain small graphs, equality is achieved. For instance, when $ G = C_4 $ or $ C_5 $, we have \(n_{\geq 0}(G) \cdot n_{\geq 0}(\overline{G}) = \mu_n\). However, it remains unclear whether this lower bound is tight for general $n$.


If we restrict attention to the \emph{unweighted} inertia of a graph \( G \), we can make some progress. The additive half of the Nordhaus--Gaddum type inequalities has essentially been established in \cite[Theorem 7]{Elphick2017}, where the lower bound for the negative inertia implies
$$n_{\geq 0}(A_G) + n_{\geq 0}(A_{\overline{G}}) \leq n + 1.$$
The multiplicative half of the Nordhaus--Gaddum type inequalities appears to be more challenging, and we state it as a conjecture. Observe that for any graph $G$, either $G$ or its complement $\overline{G}$ is connected, so it suffices to consider the case where $G$ is connected.
\begin{conj}\label{conj:NordhausGaddumforinertia1}
Let \( G \) be a connected graph on \( n \) vertices, and let \( \overline{G} \) denote its complement. Then we have
\[n_{\geq 0}(A_G) \cdot n_{\geq 0}(A_{\overline{G}}) 
 \geq n. \]
\end{conj}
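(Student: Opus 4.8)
We sketch a possible strategy for Conjecture~\ref{conj:NordhausGaddumforinertia1}. The inequality $n_{\ge 0}(A_G)\cdot n_{\ge 0}(A_{\overline{G}})\ge n$ is symmetric under complementation, and $n_{\ge 0}(A_H)=1$ forces $H=K_n$; so after handling the case $\{G,\overline{G}\}\ni K_n$, where equality holds, we may assume that both $G$ and $\overline{G}$ have an edge, that $n_{\ge 0}(A_G),n_{\ge 0}(A_{\overline{G}})\ge 2$, and (swapping $G$ and $\overline{G}$ if needed) that $|E(G)|\le\tfrac12\binom n2$. The plan is to pass to the hyperplane $\mathbf{1}^{\perp}$. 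Let $P=I-\tfrac1n J$ be the orthogonal projection onto $\mathbf{1}^{\perp}$, and set $B_G=PA_GP$, $B_{\overline{G}}=PA_{\overline{G}}P$, viewed as Hermitian operators on the $(n-1)$-dimensional space $\mathbf{1}^{\perp}$. Since $A_G+A_{\overline{G}}=J-I$ and $PJP=0$, we obtain the identity
\[
B_G+B_{\overline{G}}=-I_{n-1}.
\]
Thus if $\theta_1\ge\cdots\ge\theta_{n-1}$ are the eigenvalues of $B_G$ on $\mathbf{1}^{\perp}$, those of $B_{\overline{G}}$ are $-1-\theta_{n-1}\ge\cdots\ge-1-\theta_1$. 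Writing $r=\#\{i:\theta_i\ge0\}$, $s=\#\{i:\theta_i\le-1\}$ and $m=\#\{i:-1<\theta_i<0\}$, we have $r+s+m=n-1$, with $n_{\ge 0}(B_G|_{\mathbf{1}^{\perp}})=r$ and $n_{\ge 0}(B_{\overline{G}}|_{\mathbf{1}^{\perp}})=s$.

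The next step is to control how the inertia changes under this compression. Writing $A_G$ in an orthonormal basis adapted to $\mathbf{1}^{\perp}\oplus\mathbb{R}\mathbf{1}$, the diagonal entry in the $\mathbf{1}$-direction equals $\mathbf{1}^{\top}A_G\mathbf{1}/n=2|E(G)|/n>0$; so the inertia-additivity (Schur complement) formula — equivalently, a rank-one correction for the degree-irregularity vector $PA_G\mathbf{1}$ together with Cauchy interlacing — should yield $n_{\ge 0}(A_G)=r+1$ and $n_{\ge 0}(A_{\overline{G}})=s+1$. This is immediate when $G$ is regular, since then $\mathbf{1}$ is the Perron eigenvector and $B_G|_{\mathbf{1}^{\perp}}$ has eigenvalues $\lambda_2(G),\dots,\lambda_n(G)$, but for irregular $G$ one must verify that the rank-one correction does not produce an additional negative eigenvalue; this is the first genuine technical point. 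Granting it, the relation $r+s+m=n-1$ turns Conjecture~\ref{conj:NordhausGaddumforinertia1} into the \emph{equivalent} assertion
\[
rs\ge m,
\]
that is: among the eigenvalues of the compressed adjacency matrix $B_G|_{\mathbf{1}^{\perp}}$, the number lying in $(-1,0)$ is at most the product of the number lying in $[0,\infty)$ and the number lying in $(-\infty,-1]$. This reformulation exposes the self-complementary symmetry of the problem, shows that $K_n$ (for which $r=0$, $s=n-1$, $m=0$) is an extremal instance, and, combined with the identities $\sum_i\theta_i=-2|E(G)|/n$ and $\sum_i\theta_i^2=2|E(G)|-\tfrac2n\sum_v\deg(v)^2+(2|E(G)|/n)^2$, constrains the number of $\theta_i$ with $|\theta_i|\ge1$.

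It remains to prove $rs\ge m$, and this is where the difficulty lies. Some cases are easy: $\alpha(G)\le n_{\ge 0}(A_G)$ and $\omega(G)=\alpha(\overline{G})\le n_{\ge 0}(A_{\overline{G}})$, so the conjecture holds whenever $\alpha(G)\,\omega(G)\ge n$ — in particular for all perfect $G$, by $\alpha(G)\chi(G)\ge n$ and $\chi(G)=\omega(G)$. The substantive regime is $\alpha(G)\,\omega(G)<n$, in which one must show that a graph whose adjacency matrix has few nonnegative eigenvalues is structurally rigid; my expectation is that the decisive input is a classification (or a sufficiently strong approximate version) showing that such a graph contains a near-spanning induced complete multipartite subgraph $H$, so that $\overline{G}$ contains the disjoint union of cliques $\overline{H}$ and hence $n_{\ge 0}(A_{\overline{G}})\ge n_{\ge 0}(A_{\overline{H}})$ is large by interlacing — and if $G$ is close to complete multipartite while $\overline{G}$ is close to complete multipartite as well, then $G$ must be close to $K_n$, recovering the extremal case. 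The main obstacle is that $rs\ge m$ is tight at $K_n$, so the argument has to be essentially exact, and the standard tools provably do not suffice on their own: the first two moments of $B_G|_{\mathbf{1}^{\perp}}$ do not imply $rs\ge m$, and $\alpha(G)\,\omega(G)$ can fall far below $n$ (already $C_7$ has $\alpha=\omega=2$), so even the two elementary lower bounds together are insufficient. A complete proof therefore seems to require either exploiting higher moments of $A_G$ against the rigid constraint $A_G+A_{\overline{G}}=J-I$, or establishing the structural classification above; both look substantial. As a concrete first step I would run an exhaustive computer search for small $n$, both to corroborate the conjecture and to test whether $K_n$ and $\overline{K_n}$ are its only connected extremal graphs.
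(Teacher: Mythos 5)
The statement you are addressing is Conjecture~\ref{conj:NordhausGaddumforinertia1}, which the paper leaves \emph{open}: the authors verify it by computer for all graphs with at most $9$ vertices, and prove it only for two special families (strongly regular graphs, via $f(G)=g(\overline G)$, and perfect graphs, via $\alpha(G)\,\omega(G)\ge n$). There is therefore no ``paper's own proof'' against which your argument can be measured; the relevant question is whether your proposal actually closes the conjecture, and it does not — as you yourself acknowledge at two separate places.

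Your compression framework is sound as far as it goes: $P(A_G+A_{\overline G})P=-P$ gives $B_G+B_{\overline G}=-I$ on $\mathbf{1}^{\perp}$, the eigenvalues of $B_{\overline G}$ on $\mathbf{1}^{\perp}$ are $-1-\theta_{n-1}\ge\cdots\ge-1-\theta_1$, and the counts $r,s,m$ with $r+s+m=n-1$ are set up correctly; the observation that $n_{\ge 0}(A_H)=1$ forces $H=K_n$ is also correct (two non-adjacent vertices give a $2\times2$ zero principal submatrix, so $\lambda_2\ge0$ by interlacing). But your reduction to ``$rs\ge m$'' rests entirely on the claim that $n_{\ge 0}(A_G)=r+1$ and $n_{\ge 0}(A_{\overline G})=s+1$ for connected $G$. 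Cauchy interlacing only yields $n_{\ge 0}(A_G)\in\{r,r+1\}$, and when $B_G|_{\mathbf{1}^{\perp}}$ is nonsingular the Schur-complement inertia formula gives $n_{\ge 0}(A_G)=r+[\,a-c^{*}B^{-1}c\ge0\,]$ with $a=2|E(G)|/n$ and $c$ the centered degree vector; you have not shown $a-c^{*}B^{-1}c\ge0$, nor addressed the degenerate case where $B_G|_{\mathbf{1}^{\perp}}$ is singular. You flag this as ``the first genuine technical point,'' which is accurate, but it is a real gap rather than a routine verification: the positivity of the Schur complement is exactly equivalent to the claim $n_{\ge 0}(A_G)=r+1$, so the Schur-complement route by itself is circular, and an independent argument is needed. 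Moreover, if the claim failed for some $G$ — say $n_{\ge 0}(A_G)=r$ — the target inequality would become $rs\ge r+s+m+1$, which is \emph{harder}, so the reduction cannot simply absorb the loss.

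Even granting the reduction, you are candid that you do not know how to prove $rs\ge m$: your proposed route via a structural classification of graphs with few nonnegative adjacency eigenvalues is speculative, and you yourself note the tightness at $K_n$ makes any argument essentially exact. The parts you do establish reproduce what the paper already has (the perfect-graph case, via $\alpha(G)\omega(G)\ge n$), and you do not recover the strongly-regular case that the paper also settles. The reformulation $rs\ge m$ in terms of the spectrum of $PA_GP$ on $\mathbf{1}^{\perp}$ — with the three eigenvalue bins $[0,\infty)$, $(-1,0)$, $(-\infty,-1]$ — is a genuinely new and potentially useful way to package the conjecture, and it makes the self-complementary symmetry transparent; it would be worth stating and pursuing. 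But as a proof the proposal is incomplete in exactly the two places you point to, and the conjecture remains open.
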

This multiplicative bound is tight, for example, for \( K_n \), \( P_4 \), the Beineke non-line graph G8, \texttt{Self-Complementary(9, 17)} and \texttt{Triangulated(8, 11)}. We have verified Conjecture~\ref{conj:NordhausGaddumforinertia1} numerically for all graphs with at most $9$ vertices, as well as for all graphs in the Wolfram Mathematica graph database with up to $100$ vertices. As examples, we prove that Conjecture~\ref{conj:NordhausGaddumforinertia1} holds for two important classes of graphs:

\noindent\textbf{Strongly regular graphs:} Let \( G \) be a SRG with spectrum \( (d^1, r^f, s^g) \). Then
\[
n_{\ge 0}(A_G) = 1 + f(G), \qquad n_{\ge 0}(A_{\overline{G}}) = 1 + f(\overline{G}).
\]
Using the facts that \( f(G) = g(\overline{G}) \) and \( g(G) = f(\overline{G}) \) (see~\cite{Elphick2017}), we obtain
\[
n_{\ge 0}(A_G) \cdot n_{\ge 0}(A_{\overline{G}}) = \left(1 + f(G)\right)\left(1 + g(G)\right) = n + f(G)g(G) \geq n.
\]

\noindent\textbf{Perfect graphs:} It is known that the complement of a perfect graph is also perfect~\cite[Theorem~11.23]{Lovasz19}. Therefore, with $\omega(G)$ denoting the clique number and $\alpha(G)$ the independence number of $G$, we have
\[
n_{\ge 0}(A_G) \cdot n_{\ge 0}(A_{\overline{G}}) \geq \alpha(G) \cdot \alpha(\overline{G}) = \omega(\overline{G}) \cdot \omega(G) = \chi(\overline{G}) \cdot \chi(G) \geq n,
\]
where the first inequality follows from the inertia bound~\eqref{eq:inertia-bound-1-1}, and the last inequality follows from the classical Nordhaus--Gaddum bound.

\subsection{Inertia and minrank}
We now discuss another graph parameter that appears to be related to inertia, namely the \emph{minrank}, also known as the \emph{Haemers bound}. 
\begin{defn}
Let \( G = (V, E) \) be a graph, and let \( \mathbb{F} \) be a finite or infinite field. A matrix \( M \in \mathbb{F}^{V \times V} \), not necessarily symmetric, is said to \emph{fit} \( G \) if
\begin{itemize}
    \item \( M_{vv} = 1 \) for all \( v \in V \), and
    \item \( M_{uv} = 0 \) whenever \( u \neq v \) and \( uv \notin E \).
\end{itemize} The \emph{minrank} of \( G \) over \( \mathbb{F} \), denoted by \( \mathcal{H}(G; \mathbb{F}) \), is the minimum rank of a matrix \( M \) over \( \mathbb{F} \) that fits \( G \).
\end{defn} This parameter was introduced by Haemers~\cite{Haemers1, Haemers2} to provide upper bounds on the Shannon capacity of graphs and to construct counterexamples to three conjectures of Lov\'{a}sz. It possesses several desirable properties. For example, it provides a powerful upper bound for the Shannon capacity \( \Theta(G) \)~\cite{Haemers1}:
\[
\mathcal{H}(G; \mathbb{F}) \geq \Theta(G),
\]
and it satisfies a Nordhaus--Gaddum type inequality~\cite[Section~5]{AlonBalla2020}:
\[
\mathcal{H}(G; \mathbb{F}) \cdot \mathcal{H}(\overline{G}; \mathbb{F}) \geq |G|.
\]

In a personal communication, Igor Balla observed that the Haemers bound with $\FF = \mathbb{R}$ also upper bounds the inertia.
\begin{prop}[\cite{Balla2025}]
    For any graph $G$, we have
    $$n_{\geq 0}(G) \leq 2\mathcal{H}(G; \mathbb{R}).$$
\end{prop}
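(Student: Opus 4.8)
The plan is to take a rank-$r$ matrix $M$ over $\mathbb{R}$ that fits $G$, where $r = \mathcal{H}(G;\mathbb{R})$, and build from it a weighted adjacency matrix of $G$ with few nonnegative eigenvalues. The matrix $M$ need not be symmetric, so first I would symmetrize: consider $N = \frac{1}{2}(M + M^\top)$. This is a real symmetric matrix with $N_{vv} = 1$ for all $v$, and $N_{uv} = 0$ whenever $uv \notin E$ and $u \neq v$ (since both $M_{uv}$ and $M_{vu}$ vanish in that case). Moreover $\operatorname{rank}(N) \leq \operatorname{rank}(M) + \operatorname{rank}(M^\top) = 2r$. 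Thus $N$ has at most $2r$ nonzero eigenvalues, hence at most $2r$ positive eigenvalues.

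Next I would remove the diagonal. Write $N = A + D$, where $D$ is the diagonal part (with $D_{vv} = 1$) and $A = N - D$ has zero diagonal and is supported on edges of $G$ — so $A$ is a weighted adjacency matrix of $G$. Since $D = I$, we have $A = N - I$, so the eigenvalues of $A$ are exactly those of $N$ shifted down by $1$. In particular, $n_{\geq 0}(A)$ counts eigenvalues of $N$ that are $\geq 1$. But $N$ has at most $2r$ positive eigenvalues, so it certainly has at most $2r$ eigenvalues that are $\geq 1$. Therefore
\[
n_{\geq 0}(G) \leq n_{\geq 0}(A) \leq 2r = 2\mathcal{H}(G;\mathbb{R}),
\]
as desired.

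The only subtlety to double-check is the edge-case where $A$ is the zero matrix (i.e.\ $G$ has no edges), in which case $A$ is still a valid weighted adjacency matrix and $n_{\geq 0}(A) = n$; but then $\mathcal{H}(G;\mathbb{R}) = n$ as well (the only fitting matrix is $I$, up to the zero pattern forcing a diagonal matrix), so the bound holds trivially. I do not anticipate a genuine obstacle here — the argument is short, and the factor of $2$ comes precisely from the symmetrization step, which is the one place the non-symmetry of the minrank-optimal matrix is paid for.
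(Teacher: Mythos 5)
Your proof is correct and takes essentially the same approach as the paper: symmetrize the fitting matrix $M$ to $\frac{1}{2}(M + M^\top)$, subtract the identity to obtain a weighted adjacency matrix, and use the rank bound $\operatorname{rank}(M + M^\top) \leq 2r$ together with the eigenvalue shift to bound the number of nonnegative eigenvalues by $2r$. The edge case you flag is not a genuine concern, as you note.
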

\begin{proof}
Let \( M \in \mathbb{R}^{V \times V} \) be a matrix of rank \( r \) that fits \( G \). Define the symmetric matrix
\[
A := \frac{1}{2}\left(M + M^\top\right) - I.
\]
Then \( A \) is a weighted adjacency matrix of \( G \). Since \( \operatorname{rank}\left(M + M^\top\right) \leq 2r \), it follows that \( M + M^\top \) has rank at most \( 2r \). All eigenvalues of \( A \) are shifted down by 1 from those of \( \frac{1}{2}\left(M + M^\top\right) \), so \( A \) has at most \( 2r \) eigenvalues greater than \( -1 \), and thus at most \( 2r \) non-negative eigenvalues. Therefore,
\[
n_{\geq 0}(G) \leq 2r \leq 2\mathcal{H}(G; \mathbb{R}),
\] as desired. \end{proof}


It is natural to ask whether a reverse inequality might hold.

\begin{ques}
Does there exist a function \( f : \mathbb{N} \to \mathbb{R} \) with $\lim_{x \to \infty} f(x) = +\infty$ such that
    \[
    n_{\geq 0}(G) \geq f\bigl(\mathcal{H}(G; \mathbb{R})\bigr)
    \]
    holds for every graph \( G \)? In particular, can \( f \) be taken to be a polynomial function, i.e., \( f(x) = x^c \) for some constant \( c > 0 \)?
\end{ques}

A similar question can be asked for the fractional version of minrank, defined by Blasiak~\cite{Blasiak2013} and Bukh--Cox~\cite{BukhCox2019}.


Finally, Kwan and Widgerson~\cite[Conjecture~4.1]{Kwan2024} conjectured that if $G$ is the Erd\H{o}s--R\'enyi random graph $\mathbb{G}(n, \frac{1}{2})$, then $n_{\geq 0}(G) = \Omega\left(\frac{n}{\log n}\right)$ with high probability. We mention that the analogous problem for minrank has been solved by Alon, Balla, Gishboliner, Mond, and Mousset~\cite{AlonBalla2020}, who showed that if $G \sim \mathbb{G}(n, \frac{1}{2})$, then
$$\mathcal{H}(G; \mathbb{R}) = \Omega\left(\frac{n}{\log n}\right)$$
with high probability. It would be interesting to investigate whether their techniques can be adapted to the inertia setting.

\appendix

\section{Inertia bound via orthogonal rank}
\label{sec:orthogonal-rank}
We first recall the definitions of orthogonal rank and projective rank, as discussed in~\cite{Wocjan2019}.
\begin{defn}
    Let $G = (V, E)$ be a graph. The \emph{orthogonal rank} $\xi(G)$ is the smallest dimension $d$ such that we can assign to each vertex $i \in V$ a unit vector $\mathbf{x}_i \in \mathbb{R}^d$ with the property that $\langle \mathbf{x}_i, \mathbf{x}_j \rangle = 0$ for any edge $ij \in E$.
\end{defn}
\begin{defn}
A \emph{\( d/r \)-representation} of a graph \( G = (V, E) \) is a collection of rank-\( r \), \( d \times d \) orthogonal projectors \( \{P_v\}_{v \in V(G)} \) such that \( P_v P_w = 0 \) for all edges \( vw \in E \). The \emph{projective rank} of \( G \) is defined as
\[
\xi_f(G) = \inf_{d, r} \left\{ \frac{d}{r} : G \text{ has a } d/r \text{-representation} \right\}.
\]
\end{defn}

The following lemma is a direct corollary of~\cite[Theorem~4 and Remark~7]{Wocjan2019}, and provides a lower bound on the inertia of a graph in terms of its orthogonal rank.
\begin{lem}\label{lem:Elphickorthogonalrank1}
Let \( G \) be a simple graph. Then we have
\[
n_{\geq 0}(G) \geq \frac{|G|}{\xi(G)}.
\]
\end{lem}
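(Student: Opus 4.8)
The plan is to establish the pointwise inequality $n_{\geq 0}(A) \geq |G|/\xi(G)$ for \emph{every} weighted adjacency matrix $A$ of $G$, and then minimize over $A$; this is precisely the combination of \cite[Theorem~4 and Remark~7]{Wocjan2019}, so I will simply recall the argument. Write $n = |G|$ and $d = \xi(G)$, fix unit vectors $\mathbf{x}_1,\dots,\mathbf{x}_n \in \mathbb{R}^d \subseteq \C^d$ realizing the orthogonal rank, so that $\langle \mathbf{x}_i,\mathbf{x}_j\rangle = 0$ whenever $ij \in E(G)$, and let $A$ be an arbitrary weighted adjacency matrix of $G$.

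The first step is to lift everything to $\C^n \otimes \C^d$: for each vertex $i$ set $\mathbf{w}_i := \mathbf{e}_i \otimes \mathbf{x}_i$, where $\mathbf{e}_1,\dots,\mathbf{e}_n$ is the standard basis of $\C^n$. Then $\langle \mathbf{w}_i,\mathbf{w}_j\rangle = \langle \mathbf{e}_i,\mathbf{e}_j\rangle\langle \mathbf{x}_i,\mathbf{x}_j\rangle = \delta_{ij}$, so the $\mathbf{w}_i$ form an orthonormal system spanning an $n$-dimensional subspace $W$. The key computation is that, for the Hermitian operator $M := A \otimes I_d$ and any vertices $i,j$,
\[
\langle \mathbf{w}_i, M\mathbf{w}_j\rangle = \langle \mathbf{e}_i, A\mathbf{e}_j\rangle\,\langle \mathbf{x}_i,\mathbf{x}_j\rangle = A_{ij}\,\langle \mathbf{x}_i,\mathbf{x}_j\rangle = 0,
\]
since $\langle \mathbf{x}_i,\mathbf{x}_j\rangle = 0$ when $ij \in E(G)$, while $A_{ij} = 0$ when $i = j$ or $ij \notin E(G)$. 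Hence the compression of $M$ to $W$ is the zero operator, and in particular $M$ is positive semidefinite on $W$.

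Finally I would invoke the standard fact that a Hermitian matrix cannot be positive semidefinite on a subspace whose dimension exceeds its number of nonnegative eigenvalues (such a subspace would intersect the span of the negative eigenvectors nontrivially, producing a vector on which the quadratic form is strictly negative). Since the eigenvalues of $M = A \otimes I_d$ are exactly those of $A$, each with multiplicity $d$, we obtain $n = \dim W \leq n_{\geq 0}(A \otimes I_d) = d \cdot n_{\geq 0}(A)$, i.e. $n_{\geq 0}(A) \geq n/d = |G|/\xi(G)$. Taking the minimum over all weighted adjacency matrices $A$ of $G$ yields Lemma~\ref{lem:Elphickorthogonalrank1}. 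The argument is essentially routine; the only points deserving care are that the passage to complex Hermitian $A$ does not disturb the orthonormality of the $\mathbf{w}_i$ or the vanishing of $\langle \mathbf{w}_i, M\mathbf{w}_j\rangle$ (it does not, because the $\mathbf{x}_i$ are real, so $\langle \mathbf{x}_i,\mathbf{x}_j\rangle \in \mathbb{R}$, and $\langle \mathbf{e}_i, A\mathbf{e}_j\rangle = A_{ij}$), and the correct count of the nonnegative eigenvalues of the Kronecker product $A \otimes I_d$.
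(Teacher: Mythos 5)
Your proof is correct. The paper itself gives no argument for this lemma---it only cites \cite[Theorem~4 and Remark~7]{Wocjan2019}---and your argument (lift to $\mathbf{w}_i = \mathbf{e}_i \otimes \mathbf{x}_i$, observe that the compression of $A \otimes I_d$ to the span of the $\mathbf{w}_i$ vanishes, invoke that a Hermitian matrix cannot have an isotropic subspace of dimension exceeding its number of nonnegative eigenvalues, and use $n_{\geq 0}(A \otimes I_d) = d\, n_{\geq 0}(A)$) is precisely the standard derivation of the cited result, so this is a faithful filling-in of the citation rather than a genuinely different route.
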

We are now ready to prove Proposition~\ref{prop:large-indep}.
\begin{proof}[Proof of Proposition~\ref{prop:large-indep}]
     The second and third inequalities follow easily by substituting particular values of $k$ into the first inequality. Therefore, we only prove the first inequality.
     
     A celebrated result of Alon and Szegedy~\cite{AlonSzegedy1999} shows that for some absolute constant $c > 0$, for every positive integers $k, d$, there exists a family of at least $d^{\frac{c \log (k + 2)}{\log\log(k + 2)}}$ vectors in $\mathbb{R}^d$ such that any $(k + 1)$ members contain an orthogonal pair. We set 
     $$d = \lceil n^{\frac{\log\log(k + 2)}{c \log (k + 2)}}\rceil.$$ Then there exists a family of \( N \) unit vectors in \( \mathbb{R}^d \), with \( N \geq n \), such that any \( k + 1 \) members contain an orthogonal pair. In particular, we may select a subset of \( n \) such vectors still satisfying this property. Let \( G \) be the graph whose vertices correspond to these \( n \) vectors, with an edge between any two vectors that form an orthogonal pair. We have \( \alpha(G) \leq k \), since any \( k + 1 \) vectors form at least one edge in \( G \) by construction. Furthermore, the vectors themselves give an orthogonal representation of $G$, so the orthogonal rank satisfies $\xi(G) \leq d$. Applying Lemma~\ref{lem:Elphickorthogonalrank1}, we obtain
    $$n_{\geq 0}(G) \geq \frac{\abs{G}}{\xi(G)} \geq \frac{n}{d} \geq n^{1 - \frac{2\log\log(k + 2)}{c \log (k + 2)}},$$
    as desired. 
\end{proof}


We proceed to prove Proposition~\ref{prop:projective-independence-2}.
\begin{proof}[Proof of Proposition~\ref{prop:projective-independence-2}]
    Let $\{P_v\}_{v \in V}$ be a $d/r$-representation of $G$. Consider the matrix
    $$X = \sum_{v \in V} P_v.$$
    Then 
    $$X^3 = \sum_{u,v,w \in V} P_u P_v P_w.$$
    We now compute the trace of $X^3$. When $u,v,w$ are pairwise distinct, at least one of $uv, vw, wu$ is an edge. In each case we have
    $$\tr(P_u P_v P_w) = \tr( P_v P_w P_u) = 0.$$
    So it suffices to consider the summand where at least two of $u, v, w$ are equal. For every vertex $u$, $P_u$ is an orthogonal projector, so $P_u^2 = P_u$. Thus we can simplify
    $$\tr(X^3) = 3 \sum_{u,v \in V} \tr(P_u P_v) - 2 \sum_{u \in V} \tr(P_u) = 3 \tr(X^2) - 2 \tr(X).$$
    Let \( \lambda(X) \) denote the multiset of eigenvalues of \( X \). Then we have
\[
\sum_{\lambda \in \lambda(X)} (\lambda^3 - 3\lambda^2 + 2\lambda) = 0.
\]
    Since $X$ is a sum of orthogonal projections, it is positive semidefinite, and hence all eigenvalues of \( X \) are non-negative. We can verify directly that for every $\lambda \geq 0$, we have
    $$\lambda^3 - 3 \lambda^2 + 2 \lambda \geq 2\lambda - 4.$$
    So we obtain
    $$\sum_{\lambda \in \lambda(X)} (2 \lambda - 4) \leq 0.$$
    This shows that $\tr(X) \leq 2 d$. As we also have $\tr(X) = rn$, we obtain
    $$\frac{d}{r} \geq \frac{n}{2},$$
    as desired.
\end{proof}

\begin{remark}
An identical argument shows that the \emph{projective packing number} $\tilde{\alpha}(G)$, as defined in Roberson's thesis~\cite{Roberson}, is also equal to $2$ whenever $\alpha(G) = 2$. It is also worth noting that \cite{Roberson} observed the inequality \(\tilde{\alpha}(G) \cdot \xi_f(G) \geq |G|\), with equality if $G$ is vertex-transitive.
\end{remark}

\end{document}